\renewcommand{\uppercasenonmath}[1]{}
\numberwithin{equation}{section} \theoremstyle{plain}
\newtheorem*{thm*}{Main Theorem}
\newtheorem{thm}{Theorem}[section]
\newtheorem*{cor*}{Corollary}
\newtheorem*{lem*}{Lemma}
\newtheorem*{fact*}{Fact}
\newtheorem*{nota*}{Notation}
\newtheorem{prop}[thm]{Proposition}
\newtheorem*{prop*}{Proposition}
\newtheorem{rem}[thm]{Remark}
\newtheorem*{rem*}{Remark}
\newtheorem*{observation*}{Observation}
\newtheorem{exa}[thm]{Example}
\newtheorem*{exa*}{Example}
\newtheorem{df}[thm]{Definition}
\newtheorem*{df*}{Definition}
\newtheorem*{conj*}{Conjecture}
\newcommand{\Ker}{\operatorname{Ker}}
\renewcommand{\geq}{\geqslant}
\renewcommand{\ker}{\Ker}
\begin{document}
\footnote[0]{}
\begin{center}
{\large  \bf  Model structures and recollements induced by duality pairs}

\vspace{0.5cm}  Wenjing Chen, Ling Li, Yanping Rao\\
Department of Mathematics, Northwest Normal University, Lanzhou 730070,
China\\
E-mails: chenwj@nwnu.edu.cn, liling980323@163.com, raoyanping0806@163.com
\end{center}

\bigskip
\centerline { \textsc{Abstract}}
\leftskip10truemm \rightskip10truemm \noindent We give some equivalent characterizations of $\mathcal{GP}$, the class of Gorenstein $(\mathcal{L}, \mathcal{A})$-projective modules, and construct some model structures associated to duality pairs and Frobenius pairs. Moreover, some rings are described by Frobenius pairs. Meanwhile, we investigate strongly Gorenstein $(\mathcal{L}, \mathcal{A})$-projective modules and obtain some equivalent characterizations of them. Also, some model structures and recollements associated to strongly Gorenstein $(\mathcal{L}, \mathcal{A})$-projective modules are constructed.
\\
{\it Key words:} duality pair; Gorenstein $(\mathcal{L}, \mathcal{A})$-projective module; Frobenius pair; model structure; recollement.\\
{\it 2020 Mathematics Subject Classification:} 16E05, 18G25, 18G80.

\leftskip0truemm \rightskip0truemm
\bigskip

\section{Introduction}
The notion of a duality pair of $R$-modules was introduced by Holm and J$\mathrm{\phi}$rgensen in [15]. Duality pairs exist extensively (refer to [13, 15, 18]). Indeed, duality pairs are related to purity and the existence of covers and envelopes, which makes duality pairs be very useful in relative homological algebra and attract wide attention. Gillespie investigated Gorenstein homological algebra with respect to a duality pair and constructed some relevant model structures in [13]. Complete duality pairs play an important role in constructing model structures. Motivated by this, we continue to study Gorenstein homological algebra with respect to a duality pair and model structures associated to duality pairs.

Gillespie introduced exact model structures in the exact category in [10], and gave a correspondence between the exact model structure and two complete cotorsion pairs, called Hovey-Gillespie correspondence in [1]. Based on this fact, Becerril and coauthors showed how to construct an exact model structure from a Frobenius pair in [1], which tells us that Frobenius pairs also have an important effect on constructing model structures. As a result, we have tried to find Frobenius pairs by Gorenstein objects with respect to cotorsion pairs in [5], and obtained some meaningful conclusions. Wang and coauthors introduced and studied Gorenstein flat modules with respect to duality pairs, which enriches Gorenstein homological algebra with respect to a duality pair in [18].
However, it is well known that a perfect duality pair
can induce a perfect cotorsion pair.
This builds a bridge between duality pairs and cotorsion pairs. So we got some model structures associated to duality pairs and Frobenius pairs by applying some known results in [5].
In this paper, we give some characterizations of rings by Frobenius pairs.

The recollement of triangulated categories was introduced by Beilinson, Bernstein and Deligne in a geometric setting in [2], which plays an important role in algebraic geometry and in representation theory. Gillespie described a general correspondence between projective (injective) recollements of triangulated categories and projective (injective) cotorsion pairs in [12]. This provides a model category description of these recollement situations.
In this paper, we investigate special Gorenstein $(\mathcal{L}, \mathcal{A})$-projective modules, called strongly Gorenstein $(\mathcal{L}, \mathcal{A})$-projective modules. Note that for a strongly Gorenstein $(\mathcal{L}, \mathcal{A})$-projective module $G$, $(^{\perp}(G^{\perp}),G^{\perp})$ is a projective cotorsion pair, cogenerated by a set.
Based on this fact, we construct some recollements.
These recollements involve complexes built from $G^{\perp}$ or $^{\perp}(G^{\perp})$.
Naturally, when $(\mathcal{L}, \mathcal{A})$ is the level duality pair, we can get some specific examples.

This paper is organized as follows. In section 2, we give some basic notions. In section 3, we give some equivalent characterizations of $\mathcal{GP}$ and construct some model structures associated to duality pairs and Frobenius pairs. Moreover, we describe some rings by Frobenius pairs. In section 4, we investigate strongly Gorenstein $(\mathcal{L}, \mathcal{A})$-projective modules and obtain some equivalent characterizations of them. Also, some model structures and recollements associated to strongly Gorenstein $(\mathcal{L}, \mathcal{A})$-projective modules are constructed.

\section{Preliminaries}
Next we recall some notions and basic facts which we need in the later sections.

{\bf Frobenius pair.} Let $\mathcal{A}$ be an abelian category, $\mathcal{X},~\mathcal{Y}\subseteq\mathcal{A}$ two classes of objects of $\mathcal{A}$ which can be also regarded as full subcategories of $\mathcal{A}$, and $M$, $N$ objects in $\mathcal{A}$. The relative projective dimension of $M$ with respect to $\mathcal{X}$ is defined as pd$_{\mathcal{X}}(M)=\mathrm{min}\{n\geq0\mid\mathrm{Ext}^{j}_{\mathcal{A}}(M,\mathcal{X})=0~\mathrm{for}~\mathrm{every}~j>n\}$. The relative injective dimension of $N$ with respect to $\mathcal{Y}$ is defined as id$_{\mathcal{Y}}(N)=\mathrm{min}\{n\geq0\mid\mathrm{Ext}^{j}_{\mathcal{A}}(\mathcal{Y},N)=0~\mathrm{for}~\mathrm{every}~j>n\}$. Furthermore, we set pd$_{\mathcal{X}}\mathcal{Y}=\mathrm{sup}\{\mathrm{pd}_{\mathcal{X}}(Y)\mid Y\in\mathcal{Y}\}$ and id$_{\mathcal{X}}\mathcal{Y}=\mathrm{sup}\{\mathrm{id}_{\mathcal{X}}(Y)\mid Y\in\mathcal{Y}\}$.

The class $\mathcal{X}$ is left thick if it is closed under direct summands, extensions and kernels of epimorphisms in $\mathcal{A}$. $\mathcal{X}$ is thick if it is left thick and closed under cokernels of monomorphisms in $\mathcal{A}$. Let $(\mathcal{X},\omega)$ be a pair of classes of objects in $\mathcal{A}$. It is said that $\omega$ is $\mathcal{X}$-injective if id$_{\mathcal{X}}\omega=0$. $\omega$ is called a relative cogenerator in $\mathcal{X}$ if $\omega\subseteq\mathcal{X}$ and for any $X\in\mathcal{X}$, there exists a short exact sequence $0\rightarrow X\rightarrow W\rightarrow X^{'}\rightarrow 0$ with $W\in\omega$ and $X^{'}\in\mathcal{X}$. Definitions of $\mathcal{X}$-projective and a relative generator in $\mathcal{X}$ are dual. $(\mathcal{X},\omega)$ is called a left Frobenius pair if $\mathcal{X}$ is a left thick class, $\omega$ is an $\mathcal{X}$-injective relative cogenerator in $\mathcal{X}$ and $\omega$ is closed under direct summands in $\mathcal{A}$. A left Frobenius pair $(\mathcal{X},\omega)$ is strong if $\omega$ is an $\mathcal{X}$-projective relative generator in $\mathcal{X}$.

The $\mathcal{X}$-resolution dimension of $M$ is the smallest non-negative integer $n$ such that there is an exact sequence $0\rightarrow X_{n}\rightarrow \cdots\rightarrow X_{0}\rightarrow M\rightarrow 0$ with each $X_{i}\in\mathcal{X}$. If such $n$ doesn't exist, we say that the $\mathcal{X}$-resolution dimension of $M$ is infinite. We denote by $\mathcal{X}^{\wedge}$ the class of objects in $\mathcal{A}$ having a finite $\mathcal{X}$-resolution dimension.

Let $(\mathcal{X},\mathcal{Y})$ be a pair of classes of objects in $\mathcal{A}$ and $\omega=\mathcal{X}\cap\mathcal{Y}$. We say that $(\mathcal{X},\mathcal{Y})$ is a left Auslander-Buchweitz-context (left AB-context for short) if the pair $(\mathcal{X},\omega)$ is a left Frobenius pair, $\mathcal{Y}$ is thick and $\mathcal{Y}\subseteq\mathcal{X}^{\wedge}$.

{\bf Exact category.} An exact category is a pair $(\mathcal{B}, \varepsilon)$ consisting of an additive category $\mathcal{B}$ and an exact structure $\varepsilon$ on $\mathcal{B}$. Elements of $\varepsilon$ are called short exact sequences.

An exact category $(\mathcal{B}, \varepsilon)$ is a Frobenius category if $(\mathcal{B}, \varepsilon)$ has enough projectives and enough injectives such that the projectives coincide with the injectives. For any objects $M,~N\in \mathcal{B}$, let $\mathcal{P}(M,N)$ denote the abelian group of morphisms from $M$ to $N$ factoring through some projective object. Furthermore, the stable category of $\mathcal{B}$ denotes $\underline{\mathcal{B}}:= \mathcal{B}/\mathcal{P}$, where the objects of $\underline{\mathcal{B}}$ are same as the objects of $\mathcal{B}$ and $\mathrm{Hom}_{\underline{\mathcal{B}}}(M,N):= \mathrm{Hom}_{\mathcal{B}}(M,N)/\mathcal{P}(M,N)$. It is well known that $\underline{\mathcal{B}}$ is a triangulated category.

{\bf Recollement.} Let $\mathcal{T}^{\prime}$, $\mathcal{T}$ and $\mathcal{T}^{\prime\prime}$ be triangulated categories. A recollement of $\mathcal{T}$ relative to $\mathcal{T}^{\prime}$ and $\mathcal{T}^{\prime\prime}$ is a diagram of triangulated functors
$$\xymatrix{\mathcal{T}^{\prime}\ar^-{i_{\ast}}[r]&\mathcal{T}\ar^-{j^{\ast}}[r]
\ar^-{i^{!}}@/^1.2pc/[l]\ar_-{i^{\ast}}@/_1.6pc/[l]
&\mathcal{T}^{\prime\prime}\ar^-{j_{\ast}}@/^1.2pc/[l]\ar_-{j_{!}}@/_1.6pc/[l]}$$
satisfying the following conditions:

(R1) $(i^{\ast},i_{\ast},i^{!})$ and $(j_{!},j^{\ast},j_{\ast})$ are adjoint triples,

(R2) $j^{\ast}i_{\ast}=0$,

(R3) $i_{\ast}$, $j_{!}$ and $j_{\ast}$ are full embeddings,

(R4) any object $X$ in $\mathcal{T}$ determines distinguished triangles
$i_{\ast}i^{!}X\rightarrow X\rightarrow j_{\ast}j^{\ast}X\rightarrow (i_{\ast}i^{!}X)[1]$
and $j_{!}j^{\ast}X\rightarrow X\rightarrow i_{\ast}i^{\ast}X\rightarrow (j_{!}j^{\ast}X)[1]$ (see [2]).

{\bf Cotorsion pair.} Let $\mathcal{A}$ be an abelian category. A cotorsion pair is a pair $(\mathcal{X}, \mathcal{Y})$ of classes of objects
in $\mathcal{A}$ such that $\mathcal{X}^\perp=\mathcal{Y}$ and $\mathcal{X}=$
$^\perp\mathcal{Y}$, where $\mathcal{X}^\perp=\{A\in\mathcal{A}\hspace{0.03cm}|\hspace{0.03cm}\mathrm{Ext}^{1}_{\mathcal{A}}(X,A)=0,\ \forall\ X\in\mathcal{X}\}$ and $^\perp\mathcal{Y}=\{B\in\mathcal{A}\hspace{0.03cm}|\hspace{0.03cm}\mathrm{Ext}^{1}_{\mathcal{A}}(B,Y)=0,\ \forall\ Y\in\mathcal{Y}\}$.
A cotorsion pair $(\mathcal{X}, \mathcal{Y})$ is said to be complete if it has enough projectives and injectives, i.e.,
for any object $A\in\mathcal{A}$, there are exact
sequences $0\rightarrow Y\rightarrow X\rightarrow A\rightarrow 0$ and $0\rightarrow A\rightarrow Y'\rightarrow X'\rightarrow 0$ respectively with $Y,~Y'\in\mathcal{Y}$ and $X,~X'\in\mathcal{X}$.
A cotorsion pair $(\mathcal{X}, \mathcal{Y})$ is said to be hereditary if $\mathrm{Ext}^i_\mathcal{A}(X,Y)=0$ for all
$X\in\mathcal{X},Y\in\mathcal{Y}$ and all $i\geq 1$. It follows from [12, Lemma 2.3] that for a hereditary cotorsion pair $(\mathcal{X}, \mathcal{Y})$, $\mathcal{X}$ is closed under kernels of epimorphisms and $\mathcal{Y}$ is closed under cokernels of monomorphisms. In addition, let $\mathcal{A}$ have enough projectives. We call a complete cotorsion pair $(\mathcal{X}, \mathcal{Y})$ a projective cotorsion pair if $\mathcal{Y}$ is thick and $\mathcal{X}\cap\mathcal{Y}$ coincides with the class of projective objects.

Unless stated to the contrary, we assume in the following that $R$ is an associative ring with an identity, and all modules are left $R$-modules. $R$-Mod denotes the category of all left $R$-modules. $\mathcal{P}$ denotes the class of all projective
left $R$-modules. $\mathcal{F}$ denotes the class of all flat left $R$-modules.
For an $R$-module $M$, $M^{+}=\mathrm{Hom}_{\mathbb{Z}}(M,\mathbb{Q/Z})$ denotes the character module of $M$.
For some unexplained results, we refer the reader to [4, 7, 8, 14, 16, 17].

{\bf Complex.} A complex is a sequence of $R$-modules
$$C=\cdots \rightarrow C_{2}\stackrel{d_{2}}\rightarrow C_{1}\stackrel{d_{1}}\rightarrow C_{0}\stackrel{d_{0}}\rightarrow C_{-1}\stackrel{d_{-1}}\rightarrow C_{-2}\stackrel{}\rightarrow\cdots$$
together with homomorphisms such that $d_{n}d_{n+1}=0$ for all $n\in \mathbb{Z}$.
The $n$th cycle ($n$th boundary) of $C$ is defined as Ker$d_{n}$ (Im$d_{n+1}$) and is denoted by $Z_{n}(C)~(B_{n}(C))$. $C$ is called exact or acyclic if Ker$d_{n}=~$Im$d_{n+1}$ for each $n\in \mathbb{Z}$. We use Ch$(R)$ to denote the category of complexes of $R$-modules.

{\bf Duality pair.} A duality pair over $R$ is a pair $(\mathcal{L}, \mathcal{A})$, of classes of $R$-modules, satisfying $L\in\mathcal{L}$ if and only if $L^{+}\in\mathcal{A}$, and $\mathcal{A}$ is closed under direct summands and finite direct sums, where $\mathcal{A}$ is a class of right $R$-modules. A duality pair
$(\mathcal{L}, \mathcal{A})$ is called perfect if $\mathcal{L}$ contains the module $_RR$, and is closed under coproducts and extensions. $\{\mathcal{L}, \mathcal{A}\}$ is a symmetric duality pair over $R$ if $(\mathcal{L}, \mathcal{A})$ and $(\mathcal{A}, \mathcal{L})$ are duality pairs. A duality pair $(\mathcal{L}, \mathcal{A})$ is complete if $\{\mathcal{L}, \mathcal{A}\}$ is a symmetric duality pair and $(\mathcal{L}, \mathcal{A})$ is a perfect duality pair over $R$.

\section{Gorenstein homological algebra relative to a duality pair and model structures}
The goal of this section is to investigate Gorenstein homological algebra with respect to a fixed complete duality pair $(\mathcal{L}, \mathcal{A})$ and construct some model structures.

An $R$-module $M$ is called Gorenstein $(\mathcal{L}, \mathcal{A})$-projective if there exists a $\mathrm{Hom}_{R}(-,\mathcal{L})$-exact exact sequence $$\mathbb{P}:\cdots\rightarrow P_{1}\rightarrow P_{0}\rightarrow P_{-1}\rightarrow P_{-2}\rightarrow \cdots$$
with each $P_{i}\in\mathcal{P}$ such that $M\cong$~Ker$(P_{-1}\rightarrow P_{-2})$. $\mathcal{GP}$ denotes the class of all Gorenstein $(\mathcal{L}, \mathcal{A})$-projective modules (see [13]). It is well-known that by symmetry, all the kernels, the images and all the cokernels of $\mathbb{P}$ are in $\mathcal{GP}$. $\mathcal{P}\subseteq\mathcal{GP}$ via the exact sequence $0\rightarrow P\stackrel{1}\rightarrow P\rightarrow 0$ for any $P\in\mathcal{P}$ and $\mathrm{Ext}^{i}_{R}(M,L)=0$ for any $L\in\mathcal{L}$ and $i\geq 1$.
$\mathcal{GP}$ is closed under direct sums, extensions, direct summands and kernels of epimorphisms.
Since the perfect duality pair $(\mathcal{L}, \mathcal{A})$ implies $\mathcal{P}\subseteq\mathcal{F}\subseteq\mathcal{L}$ by [13, Proposition 2.3], a Gorenstein $(\mathcal{L}, \mathcal{A})$-projective module is Ding projective, of course, it is Gorenstein projective (see [6, 9]). We use $\mathcal{DP}$ and $\mathcal{GP}(R)$ to denote the class of all Ding projective modules and the class of all Gorenstein projective modules respectively.

The following result gives some equivalent characterizations of Gorenstein $(\mathcal{L}, \mathcal{A})$-projective modules.

\begin{prop}\label{prop:2.4}{\it{The following conditions are equivalent for an $R$-module $M$.

(1) $M$ is a Gorenstein $(\mathcal{L}, \mathcal{A})$-projective module.

(2) There exists a $\mathrm{Hom}_{R}(-,\mathcal{L})$-exact exact sequence $$0\rightarrow M\rightarrow P_{-1}\rightarrow P_{-2}\rightarrow P_{-3}\rightarrow \cdots$$
with each $P_{i}\in\mathcal{P}$ and $\mathrm{Ext}^{i}_{R}(M,L)=0$ for any $L\in\mathcal{L}$ and $i\geq 1$.

(3) There exists an exact sequence $0\rightarrow M\rightarrow P\rightarrow G\rightarrow 0$ with $P\in\mathcal{P}$ and $G\in\mathcal{GP}$.

(4) There exists an exact sequence $0\rightarrow G_{1}\rightarrow G_{0}\rightarrow M\rightarrow 0$ with $G_{1},~G_{0}\in\mathcal{GP}$ and $\mathrm{Ext}^{1}_{R}(M,L)=0$ for any $L\in\mathcal{L}$.

(5) There exists a $\mathrm{Hom}_{R}(-,\mathcal{L})$-exact exact sequence $$0\rightarrow M\rightarrow G_{-1}\rightarrow G_{-2}\rightarrow G_{-3}\rightarrow \cdots$$
with each $G_{i}\in\mathcal{GP}$ and $\mathrm{Ext}^{i}_{R}(M,L)=0$ for any $L\in\mathcal{L}$ and $i\geq 1$.

(6) There exists a $\mathrm{Hom}_{R}(-,\mathcal{L})$-exact exact sequence $$\mathbb{P}:\cdots\rightarrow G_{1}\rightarrow G_{0}\rightarrow G_{-1}\rightarrow G_{-2}\rightarrow \cdots$$
with each $G_{i}\in\mathcal{GP}$ such that $M\cong$~Ker$(G_{-1}\rightarrow G_{-2})$.

(7) There exists some class of $R$-modules $\mathcal{U}$ with $\mathcal{P}\subseteq\mathcal{U}\subseteq\mathcal{GP}$ and there exists a $\mathrm{Hom}_{R}(-,\mathcal{L})$-exact exact sequence $$0\rightarrow M\rightarrow U_{-1}\rightarrow U_{-2}\rightarrow U_{-3}\rightarrow \cdots$$
with each $U_{i}\in\mathcal{U}$ and $\mathrm{Ext}^{i}_{R}(M,L)=0$ for any $L\in\mathcal{L}$ and $i\geq 1$.

(8) There exists some class of $R$-modules $\mathcal{U}$ with $\mathcal{P}\subseteq\mathcal{U}\subseteq\mathcal{GP}$ and there exists a $\mathrm{Hom}_{R}(-,\mathcal{L})$-exact exact sequence$$\mathbb{P}:\cdots\rightarrow U_{1}\rightarrow U_{0}\rightarrow U_{-1}\rightarrow U_{-2}\rightarrow \cdots$$
with each $U_{i}\in\mathcal{U}$ such that $M\cong$~Ker$(U_{-1}\rightarrow U_{-2})$.
}}
\end{prop}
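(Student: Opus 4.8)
The plan is to establish the core equivalence $(1)\Leftrightarrow(2)\Leftrightarrow(3)$ directly, to deduce $(4)$–$(8)$ from it by routine arguments together with a single substantive input, and to close the circle. I will freely use the facts recalled before the statement: $\mathcal{P}\subseteq\mathcal{GP}$, $\mathcal{P}\subseteq\mathcal{F}\subseteq\mathcal{L}$, $\mathrm{Ext}^{i}_{R}(M,L)=0$ for $M\in\mathcal{GP}$, $L\in\mathcal{L}$, $i\geq1$, and closure of $\mathcal{GP}$ under direct sums, direct summands, extensions, and kernels of epimorphisms; and three elementary observations: (a) splicing two exact, $\mathrm{Hom}_{R}(-,\mathcal{L})$-exact sequences along a common term again gives an exact, $\mathrm{Hom}_{R}(-,\mathcal{L})$-exact sequence; (b) a short exact sequence $0\to X\to Y\to Z\to 0$ with $\mathrm{Ext}^{1}_{R}(Z,L)=0$ for all $L\in\mathcal{L}$ is automatically $\mathrm{Hom}_{R}(-,\mathcal{L})$-exact; (c) in an exact, $\mathrm{Hom}_{R}(-,\mathcal{L})$-exact complex, every short exact sequence of consecutive cycles is again $\mathrm{Hom}_{R}(-,\mathcal{L})$-exact (lift a map into $L$ backwards along the complex).

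For $(1)\Rightarrow(2)$, take the right half of the defining complex $\mathbb{P}$ and use the recalled $\mathrm{Ext}$-vanishing; for $(2)\Rightarrow(1)$, glue on the left any projective resolution of $M$, which is $\mathrm{Hom}_{R}(-,\mathcal{L})$-exact because $\mathrm{Ext}^{\geq1}_{R}(M,L)=0$. For $(1)\Rightarrow(3)$, in $\mathbb{P}$ one has $M=\operatorname{Im}(P_{0}\to P_{-1})$, so $0\to M\to P_{-1}\to M'\to 0$ with $M'$ a cokernel in $\mathbb{P}$, hence $M'\in\mathcal{GP}$. For $(3)\Rightarrow(2)$: the sequence $0\to M\to P\to G\to 0$ is $\mathrm{Hom}_{R}(-,\mathcal{L})$-exact by (b), the long exact $\mathrm{Ext}$-sequence gives $\mathrm{Ext}^{\geq1}_{R}(M,L)=0$, and splicing with a $\mathrm{Hom}_{R}(-,\mathcal{L})$-exact projective coresolution of $G$ (available since $G\in\mathcal{GP}$) produces (2). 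Next, $(1)\Rightarrow(6)\Rightarrow(8)$ and $(1)\Rightarrow(5)\Rightarrow(7)$ are immediate from $\mathcal{P}\subseteq\mathcal{GP}$ (take $\mathcal{U}=\mathcal{P}$ or $\mathcal{U}=\mathcal{GP}$), while $(7)\Rightarrow(5)$ and $(8)\Rightarrow(6)$ follow from $\mathcal{U}\subseteq\mathcal{GP}$; and $(6)\Rightarrow(5)$ by taking the right half of the complex (still $\mathrm{Hom}_{R}(-,\mathcal{L})$-exact by (c)) and extracting $\mathrm{Ext}^{\geq1}_{R}(M,L)=0$ by dimension shifting through the $\mathcal{GP}$-terms. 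Finally, $(1)\Rightarrow(4)$: in $\mathbb{P}$ one has $0\to K\to P_{0}\to M\to 0$ with $P_{0}\in\mathcal{P}\subseteq\mathcal{GP}$ and $K=\ker(P_{0}\to P_{-1})\in\mathcal{GP}$, and $\mathrm{Ext}^{1}_{R}(M,L)=0$ is recalled.

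It remains to prove $(4)\Rightarrow(1)$ and $(5)\Rightarrow(1)$. For $(4)$: the long exact $\mathrm{Ext}$-sequence of $0\to G_{1}\to G_{0}\to M\to 0$ upgrades $\mathrm{Ext}^{1}_{R}(M,L)=0$ to $\mathrm{Ext}^{\geq1}_{R}(M,L)=0$. Choose an epimorphism $J_{0}\twoheadrightarrow G_{0}$ with $J_{0}$ projective; then $J_{0}\twoheadrightarrow M$, and its kernel $\Omega$ sits in an extension $0\to\ker(J_{0}\to G_{0})\to\Omega\to G_{1}\to 0$ of $\mathcal{GP}$-modules (a syzygy of $G_{0}\in\mathcal{GP}$ lies in $\mathcal{GP}$, by closure under kernels of epimorphisms), so $\Omega\in\mathcal{GP}$; moreover $0\to\Omega\to J_{0}\to M\to 0$ is $\mathrm{Hom}_{R}(-,\mathcal{L})$-exact by (b). Applying (3) to $\Omega$ gives $0\to\Omega\to L^{0}\to\Omega'\to 0$ with $L^{0}$ projective and $\Omega'\in\mathcal{GP}$. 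Form the pushout $Y$ of $\Omega\hookrightarrow J_{0}$ and $\Omega\hookrightarrow L^{0}$: then $0\to J_{0}\to Y\to\Omega'\to 0$ makes $Y$ an extension of $\mathcal{GP}$-modules, so $Y\in\mathcal{GP}$, while $0\to L^{0}\to Y\to M\to 0$ is $\mathrm{Hom}_{R}(-,\mathcal{L})$-exact (extend a map on $L^{0}$ first to $J_{0}$ along $0\to\Omega\to J_{0}\to M\to 0$, then glue via the pushout). Since $L^{0}\in\mathcal{P}\subseteq\mathcal{L}$, this sequence is $\mathrm{Hom}_{R}(-,L^{0})$-exact, so $\operatorname{id}_{L^{0}}$ factors through $Y$ and the sequence splits; hence $M$ is a direct summand of $Y\in\mathcal{GP}$, and $M\in\mathcal{GP}$.

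The implication $(5)\Rightarrow(1)$ is the step I expect to be the main obstacle: one must convert a $\mathrm{Hom}_{R}(-,\mathcal{L})$-exact $\mathcal{GP}$-coresolution of $M$ into a projective one. I would do this by an inductive pushout construction. Put $C_{0}=M$; suppose $C_{k}$ has a $\mathrm{Hom}_{R}(-,\mathcal{L})$-exact coresolution $0\to C_{k}\to G^{0}\to G^{1}\to\cdots$ with all $G^{i}\in\mathcal{GP}$ and $\mathrm{Ext}^{\geq1}_{R}(C_{k},L)=0$. Apply (3) to $G^{0}$ to get $0\to G^{0}\to P^{k}\to (G^{0})'\to 0$ with $P^{k}$ projective and $(G^{0})'\in\mathcal{GP}$, and set $C_{k+1}=\operatorname{coker}(C_{k}\hookrightarrow G^{0}\hookrightarrow P^{k})$, so that $0\to C_{k}\to P^{k}\to C_{k+1}\to 0$ is $\mathrm{Hom}_{R}(-,\mathcal{L})$-exact. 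A pushout of $\operatorname{coker}(C_{k}\to G^{0})\hookrightarrow C_{k+1}$ with the first term $G^{1}$ of the old coresolution, together with closure of $\mathcal{GP}$ under extensions, shows that $C_{k+1}$ again admits a $\mathrm{Hom}_{R}(-,\mathcal{L})$-exact $\mathcal{GP}$-coresolution, and $\mathrm{Ext}^{\geq1}_{R}(C_{k+1},L)=0$ follows by dimension shifting through the projective $P^{k}$. Concatenating the sequences $0\to C_{k}\to P^{k}\to C_{k+1}\to 0$ yields a $\mathrm{Hom}_{R}(-,\mathcal{L})$-exact projective coresolution $0\to M\to P^{0}\to P^{1}\to\cdots$; with $\mathrm{Ext}^{\geq1}_{R}(M,L)=0$ from the hypothesis of (5), this is exactly (2). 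The delicate points --- propagating $\mathrm{Hom}_{R}(-,\mathcal{L})$-exactness across each pushout (which reduces to extending a map into some $L\in\mathcal{L}$ stepwise along a chain of $\mathrm{Hom}_{R}(-,\mathcal{L})$-exact monomorphisms) and keeping every cosyzygy inside $\mathcal{GP}$ --- both rest on the recalled closure properties of $\mathcal{GP}$. Since $(5)\Leftrightarrow(7)$ and $(6)\Leftrightarrow(8)$ have been noted, this completes the equivalences.
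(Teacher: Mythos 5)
Your proposal is correct, and its logical skeleton (the web of implications among (1)--(8)) matches the paper's; the differences are in which steps get written out and how. The only implication the paper proves in detail is $(4)\Rightarrow(1)$: it embeds $G_{1}$ into a projective $P$ with cokernel in $\mathcal{GP}$ (via (3)) and pushes the given sequence $0\rightarrow G_{1}\rightarrow G_{0}\rightarrow M\rightarrow 0$ out along $G_{1}\hookrightarrow P$, so that $X$ is an extension of two $\mathcal{GP}$-modules and $0\rightarrow P\rightarrow X\rightarrow M\rightarrow 0$ splits because $\mathrm{Ext}^{1}_{R}(M,P)=0$. Your $(4)\Rightarrow(1)$ uses the same mechanism but with an extra preliminary step: you first replace $G_{0}$ by a projective $J_{0}$, pass to $\Omega=\ker(J_{0}\rightarrow M)\in\mathcal{GP}$, and only then push out; this is valid but longer, and the final splitting of $0\rightarrow L^{0}\rightarrow Y\rightarrow M\rightarrow 0$ already follows from $\mathrm{Ext}^{1}_{R}(M,L^{0})=0$ (as $L^{0}\in\mathcal{P}\subseteq\mathcal{L}$), so the detour through $\mathrm{Hom}_{R}(-,L^{0})$-exactness of that sequence is unnecessary. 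The other substantive difference is $(5)\Rightarrow(1)$: the paper files it under ``straightforward'' inside $(1)\Leftrightarrow(2)\Leftrightarrow(3)\Leftrightarrow(5)$, whereas you give the standard inductive cosyzygy/pushout construction (replace each $\mathcal{GP}$-term by a projective via (3), push out, propagate $\mathrm{Hom}_{R}(-,\mathcal{L})$-exactness by the pushout universal property, keep cosyzygies in $\mathcal{GP}$ by extension-closure), which I checked is sound; this makes your write-up more self-contained without changing the substance. Your auxiliary observations (a)--(c), the dimension-shifting arguments for upgrading $\mathrm{Ext}^{1}$-vanishing to $\mathrm{Ext}^{\geq 1}$-vanishing in $(3)\Rightarrow(2)$, $(4)\Rightarrow(1)$ and $(6)\Rightarrow(5)$, and the trivial interchanges $(5)\Leftrightarrow(7)$, $(6)\Leftrightarrow(8)$ are all correct, so the circle closes.
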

\begin{proof} (1)$\Leftrightarrow$ (2) $\Leftrightarrow$ (3)$\Leftrightarrow$ (5), (1) $\Rightarrow$ (4), (1) $\Rightarrow$ (6) $\Rightarrow$ (5), (2) $\Rightarrow$ (7) $\Rightarrow$ (5), (1) $\Rightarrow$ (8) $\Rightarrow$ (6) are straightforward.

(4) $\Rightarrow$ (1) Since $G_{1}\in\mathcal{GP}$, there exists an exact sequence $0\rightarrow G_{1}\rightarrow P\rightarrow G\rightarrow 0$ with $P\in\mathcal{P}$ and $G\in\mathcal{GP}$. Consider the following pushout diagram$$\xymatrix{
                                                                 & 0 \ar[d]_{}                  & 0  \ar[d]_{}          &    &  \\
 0  \ar[r]^{}       &G_{1}     \ar[d]_{} \ar[r]^{}     &G_{0}  \ar[d]_{}  \ar[r]^{ } & M \ar@{=}[d]^{} \ar[r]^{ } &0  \\
 0 \ar[r]^{}        &P \ar[d]^{} \ar[r]^{}  &X \ar[d]^{ } \ar[r]^{}   &M  \ar[r]^{}  &0  \\
                                                       & G \ar[d]^{} \ar@{=}[r]        &G \ar[d]^{ } &   & \\
                                                        & 0                            &0      &  & }$$
Since $G_{0},~G\in\mathcal{GP}$ and $\mathcal{GP}$ is closed under extensions, $X\in\mathcal{GP}$. Since $\mathrm{Ext}^{1}_{R}(M,P)=0$, $0\rightarrow P\rightarrow X\rightarrow M\rightarrow 0$ is split. Thus $M$ is a direct summand of $X$. So $M$ is a Gorenstein $(\mathcal{L}, \mathcal{A})$-projective module.
\end{proof}

Denote $^{\perp\infty}\mathcal{L}=\{X\in R$-Mod$\hspace{0.03cm}|\hspace{0.03cm}\mathrm{Ext}^{i}_{R}(X,L)=0,\ \forall\ L\in\mathcal{L} ~\mathrm{and}~\ \forall\ i\geq 1\}$. It is easy to see that $^{\perp\infty}\mathcal{L}$ is the class of $\mathcal{L}$-projective modules.

\begin{prop}\label{prop:2.4}{\it{Assume that $\mathcal{L}$ is closed under kernels of epimorphisms. Then $\mathcal{GP}=\mathcal{DP}\cap{^{\perp\infty}\mathcal{L}}$.
}}
\end{prop}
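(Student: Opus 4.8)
The plan is to prove the equality $\mathcal{GP}=\mathcal{DP}\cap{^{\perp\infty}\mathcal{L}}$ by establishing the two inclusions separately, using Proposition~\ref{prop:2.4} (the equivalent characterizations of $\mathcal{GP}$) as the main tool. The inclusion $\mathcal{GP}\subseteq\mathcal{DP}\cap{^{\perp\infty}\mathcal{L}}$ is essentially already available: the paragraph preceding the statement records that every Gorenstein $(\mathcal{L},\mathcal{A})$-projective module is Ding projective (since $\mathcal{P}\subseteq\mathcal{F}\subseteq\mathcal{L}$ by [13, Proposition 2.3]), and it also records that $\mathrm{Ext}^i_R(M,L)=0$ for all $L\in\mathcal{L}$ and $i\geq1$, i.e. $M\in{^{\perp\infty}\mathcal{L}}$. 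So this direction needs only a sentence or two of assembly.

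The substance is the reverse inclusion $\mathcal{DP}\cap{^{\perp\infty}\mathcal{L}}\subseteq\mathcal{GP}$. Take $M\in\mathcal{DP}\cap{^{\perp\infty}\mathcal{L}}$. Being Ding projective, $M$ sits as a cycle of a complete projective resolution $\mathbb{P}:\cdots\to P_1\to P_0\to P_{-1}\to\cdots$ of projectives which stays exact after applying $\mathrm{Hom}_R(-,\mathcal{F})$ — in particular, one half of it gives an exact sequence $0\to M\to P_{-1}\to P_{-2}\to\cdots$ with each $P_i\in\mathcal{P}$, and moreover (truncating appropriately) each cosyzygy $M_{-j}=\mathrm{Ker}(P_{-j-1}\to P_{-j-2})$ is again Ding projective. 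The aim is to upgrade the $\mathrm{Hom}_R(-,\mathcal{F})$-exactness of this coresolution to $\mathrm{Hom}_R(-,\mathcal{L})$-exactness, since then criterion (2) of Proposition~\ref{prop:2.4} (together with $M\in{^{\perp\infty}\mathcal{L}}$, which supplies exactly the vanishing $\mathrm{Ext}^i_R(M,L)=0$ required there) shows $M\in\mathcal{GP}$.

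To do this, fix $L\in\mathcal{L}$. The coresolution $0\to M\to P_{-1}\to P_{-2}\to\cdots$ breaks into short exact sequences $0\to M_{-j}\to P_{-j-1}\to M_{-j-1}\to 0$ for $j\geq0$ (with $M_0=M$). Applying $\mathrm{Hom}_R(-,L)$, exactness of the whole complex at each spot is equivalent to the connecting maps $\mathrm{Hom}_R(P_{-j-1},L)\to\mathrm{Hom}_R(M_{-j},L)$ being surjective, which, by the long exact sequence, is controlled by $\mathrm{Ext}^1_R(M_{-j-1},L)$. Now here the hypothesis that $\mathcal{L}$ is closed under kernels of epimorphisms enters: since $L\in\mathcal{L}$ and $\mathcal{F}\subseteq\mathcal{L}$ (again [13, Proposition 2.3]), one iteratively builds, from a flat (indeed projective) resolution of an auxiliary module or by dimension shifting along $\mathbb{P}$, that the relevant Ext-groups $\mathrm{Ext}^i_R(M_{-j},L)$ vanish for all $i\geq1$; the closure of $\mathcal{L}$ under kernels of epimorphisms is what lets the dimension-shift argument stay inside $\mathcal{L}$ at each step (each syzygy of $L$ against a projective resolution remains in $\mathcal{L}$). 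Combined with the already-known $\mathrm{Hom}_R(-,\mathcal{F})$-exactness and the inclusion $\mathcal{F}\subseteq\mathcal{L}$, this forces $\mathrm{Hom}_R(-,L)$-exactness of $\mathbb{P}$ at every spot, so condition (6) (or (2)) of Proposition~\ref{prop:2.4} applies and $M\in\mathcal{GP}$.

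The main obstacle I anticipate is precisely this last step: converting $\mathrm{Hom}_R(-,\mathcal{F})$-exactness into $\mathrm{Hom}_R(-,\mathcal{L})$-exactness for an arbitrary $L\in\mathcal{L}$. One must be careful that the Ding-projective structure only guarantees exactness against flat modules, not against all of $\mathcal{L}$, so the argument genuinely needs to exploit both $M\in{^{\perp\infty}\mathcal{L}}$ (to kill $\mathrm{Ext}^1$ at the first spot) and the closure of $\mathcal{L}$ under kernels of epimorphisms (to propagate the vanishing down the coresolution via dimension shifting). If instead one prefers to avoid the Ext-propagation, an alternative is to argue directly: use the $\mathcal{P}\subseteq\mathcal{U}\subseteq\mathcal{GP}$ flexibility in conditions (7)–(8) of Proposition~\ref{prop:2.4} by taking $\mathcal{U}=\mathcal{GP}$ and checking that the Ding-projective resolution's cosyzygies, which lie in $\mathcal{DP}\cap{^{\perp\infty}\mathcal{L}}$, give the required $\mathrm{Hom}_R(-,\mathcal{L})$-exact coresolution — but this still reduces to the same $\mathrm{Ext}$ computation, so it is not a shortcut, merely a repackaging.
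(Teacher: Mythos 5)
Your proposal is correct and is essentially the paper's own argument: walk down the Ding-projective coresolution, note each cosyzygy has $\mathrm{Ext}^{i}(-,L)=0$ for $i\geq 2$ by dimension shifting from $M\in{^{\perp\infty}\mathcal{L}}$, and kill $\mathrm{Ext}^{1}$ by taking a projective presentation $0\to L_0\to P\to L\to 0$ whose kernel $L_0$ stays in $\mathcal{L}$ (the closure hypothesis) and using the cosyzygy's Ext-vanishing against projectives, then conclude via Proposition 3.1(2). The only differences are presentational (you treat the whole complete resolution at once, the paper iterates one step at a time), so nothing further is needed beyond writing out the $\mathrm{Ext}^{1}$-squeeze explicitly.
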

\begin{proof} It is easy to see that $\mathcal{GP}\subseteq\mathcal{DP}\cap{^{\perp\infty}\mathcal{L}}$.
Next, let $M\in\mathcal{DP}\cap{^{\perp\infty}\mathcal{L}}$. Since $M\in\mathcal{DP}$, there is an exact sequence $0\rightarrow M\rightarrow P_0\rightarrow M_0\rightarrow 0$ with $P_0\in\mathcal{P}$ and $M_0\in\mathcal{DP}$. Since $M\in{^{\perp\infty}\mathcal{L}}$, $\mathrm{Ext}^{i}_{R}(M,L)=0$ for any $L\in\mathcal{L}$ and $i\geq 1$. One gets $\mathrm{Ext}^{i}_{R}(M_0,L)=0$ for any $L\in\mathcal{L}$ and for any $i\geq 2$.
We have an exact sequence $0\rightarrow L_0\rightarrow P\rightarrow L\rightarrow 0$ with $P\in\mathcal{P}$. By assumption, $L_0\in\mathcal{L}$. We get an exact sequence
$0=\mathrm{Ext}^{1}_{R}(M_0,P)\rightarrow \mathrm{Ext}^{1}_{R}(M_0,L)\rightarrow \mathrm{Ext}^{2}_{R}(M_0,L_0)=0$. Thus $\mathrm{Ext}^{1}_{R}(M_0,L)=0$. Note that
$M_0\in\mathcal{DP}\cap{^{\perp\infty}\mathcal{L}}$. Continuing this process, we get a $\mathrm{Hom}_{R}(-,\mathcal{L})$-exact exact sequence $0\rightarrow M\rightarrow P_{0}\rightarrow P_{-1}\rightarrow P_{-2}\rightarrow\cdots$ with $P_i\in\mathcal{P}$. By Proposition 3.1, $M$ is a Gorenstein $(\mathcal{L}, \mathcal{A})$-projective module.
Therefore, $\mathcal{GP}=\mathcal{DP}\cap{^{\perp\infty}\mathcal{L}}$.
\end{proof}

\begin{rem}\label{prop:2.4}{\rm{Suppose that $\mathcal{L}$ is closed under kernels of epimorphisms. With the similar method in Proposition 3.2, one can get that $\mathcal{GP}=\mathcal{GP}(R)\cap{^{\perp\infty}\mathcal{L}}$.
}}
\end{rem}

\begin{prop}\label{prop:2.4}{\it{Let $M$ be a Gorenstein $(\mathcal{L}, \mathcal{A})$-projective module. Then the following conditions are equivalent.

(1) $M$ is projective.

(2) $M$ is of finite projective dimension.

(3) $M$ is flat.

(4) $M$ is of finite flat dimension.

(5) $M$ is in $\mathcal{L}$.

(6) $M$ has a finite $\mathcal{L}$-resolution dimension.
}}
\end{prop}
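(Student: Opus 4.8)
The plan is to dispatch the implications among (1)--(6) in two tiers: the "soft" ones that follow purely from the inclusions $\mathcal{P}\subseteq\mathcal{F}\subseteq\mathcal{L}$ (available since $(\mathcal{L}, \mathcal{A})$ is a perfect duality pair), and the single substantive implication $(6)\Rightarrow(1)$ which carries all the rigidity. For the soft tier: $(1)\Rightarrow(2)$, $(2)\Rightarrow(4)$ and $(1)\Rightarrow(3)$ are immediate from the definitions; $(3)\Rightarrow(4)$ and $(3)\Rightarrow(5)$ hold because a flat module lies in $\mathcal{F}\subseteq\mathcal{L}$; $(4)\Rightarrow(6)$ holds because a flat resolution is in particular an $\mathcal{L}$-resolution; and $(5)\Rightarrow(6)$ is trivial. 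So once $(6)\Rightarrow(1)$ is proved, the loops $(1)\Rightarrow(3)\Rightarrow(5)\Rightarrow(6)\Rightarrow(1)$ and $(1)\Rightarrow(2)\Rightarrow(4)\Rightarrow(6)$ close everything up.

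For $(6)\Rightarrow(1)$, I would argue as follows. Since $M$ is Gorenstein $(\mathcal{L}, \mathcal{A})$-projective, Proposition 3.1 provides a short exact sequence $0\to M\to P\to G\to 0$ with $P\in\mathcal{P}$ and $G\in\mathcal{GP}$; it suffices to show this sequence splits, for then $M$ is a direct summand of the projective module $P$, hence projective. Because $G$ is Gorenstein $(\mathcal{L}, \mathcal{A})$-projective, $\mathrm{Ext}^{i}_{R}(G,L)=0$ for every $L\in\mathcal{L}$ and every $i\geq 1$, i.e.\ $G\in{}^{\perp\infty}\mathcal{L}$. Now write hypothesis (6) as an exact sequence $0\to L_{n}\to\cdots\to L_{1}\to L_{0}\to M\to 0$ with all $L_{i}\in\mathcal{L}$, break it into short exact sequences $0\to K_{j+1}\to L_{j+1}\to K_{j}\to 0$ (with $K_{-1}=M$, $K_{n-1}=L_{n}$), and apply $\mathrm{Hom}_{R}(G,-)$. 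Since the middle terms $L_j$ all lie in $\mathcal{L}$, the relevant $\mathrm{Ext}_R(G,L_j)$ terms vanish and a routine dimension shift yields $\mathrm{Ext}^{1}_{R}(G,M)\cong\mathrm{Ext}^{n+1}_{R}(G,L_{n})=0$. Hence $0\to M\to P\to G\to 0$ splits, and $M$ is projective. (The case $n=0$ is just $M\in\mathcal{L}$, where $\mathrm{Ext}^1_R(G,M)=0$ directly.)

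The point to be careful about is that $(6)\Rightarrow(1)$ should \emph{not} be attempted by induction on the $\mathcal{L}$-resolution dimension of $M$ itself: the syzygies appearing in an $\mathcal{L}$-resolution of $M$ need not be Gorenstein $(\mathcal{L}, \mathcal{A})$-projective, so the inductive hypothesis would not apply to them. The correct move is to keep $M$ fixed, use the co-syzygy sequence $0\to M\to P\to G\to 0$ coming from Gorenstein $(\mathcal{L}, \mathcal{A})$-projectivity, and shift dimensions in the \emph{second} variable of $\mathrm{Ext}_{R}(G,-)$ along the $\mathcal{L}$-resolution of $M$. The vanishing $\mathrm{Ext}^{\geq 1}_{R}(G,\mathcal{L})=0$ for the Gorenstein module $G$ is exactly what forces the shifted $\mathrm{Ext}$ group to be zero, and this is the crux of the argument; everything else is formal.
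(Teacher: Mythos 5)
Your proposal is correct, and the logical bookkeeping closes properly: with $(6)\Rightarrow(1)$ in hand, the chains $(1)\Rightarrow(2)\Rightarrow(4)\Rightarrow(6)$ and $(1)\Rightarrow(3)\Rightarrow(5)\Rightarrow(6)$ yield all six equivalences, and the dimension shift $\mathrm{Ext}^{1}_{R}(G,M)\cong\mathrm{Ext}^{n+1}_{R}(G,L_{n})=0$ is valid because $G\in\mathcal{GP}$ kills $\mathrm{Ext}^{\geq 1}_{R}(G,-)$ on every term of the $\mathcal{L}$-resolution. Your route does differ from the paper's. The paper proves four substantive implications: $(2)\Rightarrow(1)$ and $(4)\Rightarrow(1)$ directly, by truncating a complete resolution of $M$ into $0\rightarrow G_{n}\rightarrow P_{n-1}\rightarrow\cdots\rightarrow P_{0}\rightarrow M\rightarrow 0$ with Gorenstein kernels, observing that finite (projective or flat) dimension forces $G_{n}$ projective, and then splitting the syzygy sequences one at a time; and $(6)\Rightarrow(5)\Rightarrow(1)$, where the dimension shift is performed in the functor $\mathrm{Ext}_{R}(M,-)$ (using $\mathrm{Ext}^{\geq1}_{R}(M,\mathcal{L})=0$ for $M$ itself) to split $0\rightarrow K_{1}\rightarrow L_{0}\rightarrow M\rightarrow 0$, concluding $M\in\mathcal{L}$ since $\mathcal{L}$ is closed under summands, and only then splitting $0\rightarrow M\rightarrow P\rightarrow G\rightarrow 0$. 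You instead funnel everything through the single implication $(6)\Rightarrow(1)$, shifting in the second variable of $\mathrm{Ext}_{R}(G,-)$ with the cosyzygy $G$ and routing $(2)$ and $(4)$ through $\mathcal{F}\subseteq\mathcal{L}$. Your approach buys economy (one nontrivial step, no need for the closure of $\mathcal{L}$ under direct summands), while the paper's buys self-contained proofs of $(2)\Rightarrow(1)$ and $(4)\Rightarrow(1)$ and the intermediate fact $M\in\mathcal{L}$, which it records afterwards in the identity $\mathcal{GP}\cap\mathcal{L}^{\wedge}=\mathcal{GP}\cap\mathcal{L}=\mathcal{P}$. One small remark: your caution that an induction on the $\mathcal{L}$-resolution dimension of $M$ would fail is aimed at a strategy the paper does not actually use; its shift also keeps $M$ fixed, just in the other variable.
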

\begin{proof} (1) $\Rightarrow$ (2), (1) $\Rightarrow$ (3) $\Rightarrow$ (5), (3) $\Rightarrow$ (4) and (5) $\Rightarrow$ (6) are obvious.

(2) $\Rightarrow$ (1) Assume $pd_{R}M=n<\infty$. Since $M$ is a Gorenstein $(\mathcal{L}, \mathcal{A})$-projective module, there exists an exact sequence $$0\rightarrow G_{n} \rightarrow P_{n-1}\stackrel{d_{n-1}}\rightarrow P_{n-2}\stackrel{d_{n-2}}\rightarrow \cdots\rightarrow P_{2} \stackrel{d_{2}}\rightarrow P_{1}\stackrel{d_{1}}\rightarrow P_{0}\stackrel{d_{0}}\rightarrow M\rightarrow 0 $$ with each $P_{i}\in\mathcal{P}$ and each $\ker d_{i}$ of this sequence in $\mathcal{GP}$. Then $G_{n}$ is projective. Put $G_{n-1}=\ker d_{n-2}$. Note that $0\rightarrow G_{n}\rightarrow P_{n-1}\rightarrow G_{n-1}\rightarrow 0$ is split. Then $G_{n-1}$ is projective. Repeating this process, we get that $M$ is projective.

(4) $\Rightarrow$ (1) It can be immediately given by the similar method above.

(6) $\Rightarrow$ (5) Let the $\mathcal{L}$-resolution dimension of $M$ be $n$. Then there exists an exact sequence $$0\rightarrow L_{n} \rightarrow L_{n-1}\rightarrow L_{n-2}\rightarrow \cdots\rightarrow L_{2} \rightarrow L_{1}\rightarrow L_{0}\rightarrow M\rightarrow 0 $$ with each $L_{i}\in\mathcal{L}$. By dimension shift, we have $\mathrm{Ext}^{i}_{R}(M,K_{n-1})=0$ for all $i\geq 1$, where $K_{n-1}=$~Im$(L_{n-1}\rightarrow L_{n-2})$, since $L_{n},~L_{n-1}\in\mathcal{L}$ and $M\in\mathcal{GP}$. By continuing this process, $\mathrm{Ext}^{1}_{R}(M,K_{1})=0$ where $K_{1}=$~Im$(L_{1}\rightarrow L_{0})$. Then $0\rightarrow K_{1}\rightarrow L_{0}\rightarrow M\rightarrow 0$ splits. Since $(\mathcal{L}, \mathcal{A})$ is a complete duality pair, $\mathcal{L}$ is closed under direct summands. Then $M\in\mathcal{L}$.

At last, we prove (5) $\Rightarrow$ (1). Since $M$ is a Gorenstein $(\mathcal{L}, \mathcal{A})$-projective module, there exists a short exact sequence $0\rightarrow M\rightarrow P\rightarrow G\rightarrow 0$ with $P\in\mathcal{P}$ and $G\in\mathcal{GP}$ by Proposition 3.1. By assumption, $M\in\mathcal{L}$, $\mathrm{Ext}^{1}_{R}(G,M)=0$, and hence $0\rightarrow M\rightarrow P\rightarrow G\rightarrow 0$ splits.
So $M$ is projective.
\end{proof}

The above proposition shows that $\mathcal{GP}\cap \mathcal{P^{\wedge}}=\mathcal{GP}\cap \mathcal{F^{\wedge}}=\mathcal{GP}\cap \mathcal{L^{\wedge}}=\mathcal{GP}\cap \mathcal{L}=\mathcal{GP}\cap \mathcal{F}=\mathcal{P}$, which is useful in the sequel.

\begin{rem}\label{prop:2.4}{\rm{If $R$ has finite weak global dimension, then $M$ is Gorenstein $(\mathcal{L}, \mathcal{A})$-projective if and only if $M$ is projective. For instance, a Gorenstein $(\mathcal{L}, \mathcal{A})$-projective module is projective over the von Neumann regular ring, where every module is flat.
In particular, if $R$ has finite global dimension, then $M$ is Gorenstein $(\mathcal{L}, \mathcal{A})$-projective if and only if $M$ is projective.
}}
\end{rem}

Since $\mathcal{GP}$ is closed under extensions, it follows from [17, 4.1] that $(\mathcal{GP},\mathcal{E})$ is an exact category, where $\mathcal{E}$ denotes the class of all exact sequences of the form $0\rightarrow L\rightarrow M\rightarrow N\rightarrow 0$ with all terms in $\mathcal{GP}$.
If readers want to know more details about exact categories, please refer to [4, 10].
Next, we will devote to showing that $\mathcal{GP}^{\wedge}$ is closed under extensions. Further, $(\mathcal{GP}^{\wedge},\mathcal{E})$ is an exact category.

\begin{prop}\label{prop:2.4}{\it{Let $n$ be the $\mathcal{GP}$-resolution dimension of $M$. Then for each exact sequence $0\rightarrow K_{n}\rightarrow P_{n-1}\rightarrow \cdots\rightarrow P_{1}\rightarrow P_{0}\stackrel{d_{0}}\rightarrow M\rightarrow 0$ with each $P_{i}\in\mathcal{P}$, $K_{n}$ is Gorenstein $(\mathcal{L}, \mathcal{A})$-projective.
}}
\end{prop}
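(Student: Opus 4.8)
The plan is to prove, by induction on $n$, the formally stronger statement that if the $\mathcal{GP}$-resolution dimension of $M$ is at most $n$, then $K_n\in\mathcal{GP}$ for every exact sequence $0\to K_n\to P_{n-1}\to\cdots\to P_0\xrightarrow{d_0}M\to 0$ with all $P_i\in\mathcal{P}$. When $n=0$ there is nothing to prove, since then $M\in\mathcal{GP}$ and $K_0\cong M$. So suppose $n\geq 1$ and that the assertion holds for $n-1$. Put $K_1=\ker d_0$; then $0\to K_1\to P_0\to M\to 0$ is exact, and $0\to K_n\to P_{n-1}\to\cdots\to P_1\to K_1\to 0$ is an exact sequence of the same type for $K_1$, now with only $n-1$ projective terms. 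Hence it suffices to show that the $\mathcal{GP}$-resolution dimension of $K_1$ is at most $n-1$: applying the induction hypothesis to $K_1$ together with this sequence then yields $K_n\in\mathcal{GP}$.

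To bound the $\mathcal{GP}$-resolution dimension of $K_1$, fix an exact sequence $0\to G_n\to G_{n-1}\to\cdots\to G_0\xrightarrow{p}M\to 0$ with every $G_i\in\mathcal{GP}$, and set $C=\ker p$, so that $0\to G_n\to\cdots\to G_1\to C\to 0$ shows the $\mathcal{GP}$-resolution dimension of $C$ is at most $n-1$. Form the pullback $E$ of $d_0\colon P_0\to M$ along $p\colon G_0\to M$. The projection $E\to P_0$ is surjective with kernel $C$, and since $P_0$ is projective the sequence $0\to C\to E\to P_0\to 0$ splits, so $E\cong C\oplus P_0$; the other projection yields a short exact sequence $0\to K_1\to E\to G_0\to 0$. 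Combining these, we obtain an exact sequence $0\to K_1\to C\oplus P_0\to G_0\to 0$ in which $G_0\in\mathcal{GP}$ and $C\oplus P_0$ has $\mathcal{GP}$-resolution dimension at most $n-1$ (using $\mathcal{P}\subseteq\mathcal{GP}$ together with closure of $\mathcal{GP}$ under finite direct sums).

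The proof thus reduces to an auxiliary monotonicity statement: if $0\to A\to B\to C'\to 0$ is exact with $C'\in\mathcal{GP}$ and the $\mathcal{GP}$-resolution dimension of $B$ is at most $m$, then the $\mathcal{GP}$-resolution dimension of $A$ is at most $m$ as well. This is where left-thickness of $\mathcal{GP}$ — closure under direct summands, extensions, and kernels of epimorphisms, as recorded before Proposition 3.1 — enters. Choosing a $\mathcal{GP}$-resolution $0\to X_m\to\cdots\to X_0\xrightarrow{\varepsilon}B\to 0$ of length $\leq m$ and setting $A'=\varepsilon^{-1}(A)\subseteq X_0$, one obtains short exact sequences $0\to A'\to X_0\to C'\to 0$ and $0\to\ker\varepsilon\to A'\to A\to 0$; the first forces $A'\in\mathcal{GP}$ by closure under kernels of epimorphisms, while $\ker\varepsilon$ has $\mathcal{GP}$-resolution dimension at most $m-1$ by the chosen resolution of $B$. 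Splicing a length-$\leq(m-1)$ $\mathcal{GP}$-resolution of $\ker\varepsilon$ onto the inclusion $\ker\varepsilon\hookrightarrow A'$ of the second sequence then produces a $\mathcal{GP}$-resolution of $A$ of length $\leq m$. (The case $m=0$ is immediate, since then $B\in\mathcal{GP}$ and $A=\ker(B\to C')\in\mathcal{GP}$.)

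I expect this monotonicity statement to be the only real obstacle; the surrounding argument is the classical Schanuel-style induction. An alternative, and perhaps more transparent, route — consistent with the direction the paper takes immediately afterward — would be to first establish that the class of modules of finite $\mathcal{GP}$-resolution dimension is itself left thick, and to read off the required behaviour of the $\mathcal{GP}$-resolution dimension from that. Either way, the only features of $\mathcal{GP}$ invoked are that it contains $\mathcal{P}$, is closed under finite direct sums and direct summands, and is a left thick class.
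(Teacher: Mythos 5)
Your argument is correct, but it is not the route the paper takes. The paper proves the statement in one step: since the $P_i$ are projective, the identity of $M$ lifts to a chain map from the projective resolution to a chosen $\mathcal{GP}$-resolution $0\to G_n\to\cdots\to G_0\to M\to 0$; the mapping cone of this lift is exact, and after cancelling the copy of $M$ it yields an exact sequence $0\to K_n\to G_n\oplus P_{n-1}\to\cdots\to G_1\oplus P_0\to G_0\to 0$ with all middle terms in $\mathcal{GP}$, whence $K_n\in\mathcal{GP}$ by repeated use of closure under kernels of epimorphisms. You instead run a Schanuel-style induction on $n$: a pullback of $P_0\to M$ along $G_0\to M$ gives $0\to K_1\to C\oplus P_0\to G_0\to 0$ with $C=\ker(G_0\to M)$, and your auxiliary monotonicity lemma (if $0\to A\to B\to C'\to 0$ is exact with $C'\in\mathcal{GP}$, then the $\mathcal{GP}$-resolution dimension of $A$ is bounded by that of $B$) shows $K_1$ has resolution dimension at most $n-1$, so the inductive hypothesis applies to $0\to K_n\to P_{n-1}\to\cdots\to P_1\to K_1\to 0$. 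I checked the details — the strengthening to ``dimension at most $n$'', the pullback splitting $E\cong C\oplus P_0$, and the splicing in the lemma — and they are sound; both proofs use precisely the same closure properties ($\mathcal{P}\subseteq\mathcal{GP}$, finite direct sums, kernels of epimorphisms). The paper's mapping-cone argument is shorter once the exactness of the cone is granted, while your version avoids the cone computation entirely and isolates a reusable dimension-shifting lemma, which fits naturally with (and could streamline) the paper's subsequent Proposition 3.7 on $\mathcal{GP}^{\wedge}$.
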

\begin{proof} By assumption, there exists an exact sequence
$0\rightarrow G_{n}\rightarrow G_{n-1}\rightarrow \cdots\rightarrow G_{1}\stackrel{g_{1}}\rightarrow G_{0}\stackrel{g_{0}}\rightarrow M\rightarrow 0$ with each $G_{i}\in\mathcal{GP}$. For exact sequence $0\rightarrow K_{n}\rightarrow P_{n-1}\rightarrow \cdots\rightarrow P_{1}\rightarrow P_{0}\stackrel{d_{0}}\rightarrow M\rightarrow 0$, Since $P_{i}\in\mathcal{P}$, there is a commutative diagram
$$\xymatrix{
   & 0  \ar[r]^{}& K_{n}\ar[d]^{f_{n}}\ar[r]^{} & P_{n-1} \ar[d]^{f_{n-1}} \ar[r]^{} & \cdots  \ar[r]^{} &P_{1} \ar[d]^{f_{1}} \ar[r]^{} & P_{0}\ar[d]^{f_{0}} \ar[r]^{d_{0}} & M \ar@{=}[d]_{} \ar[r]^{} & 0 \\
  & 0  \ar[r]^{}& G_{n}\ar[r]^{} & G_{n-1} \ar[r]^{}& \cdots  \ar[r]^{}& G_{1} \ar[r]^{g_{1}} & G_{0} \ar[r]^{g_{0}} & M \ar[r]^{} & 0 }$$
such that the mapping cone is exact. We have the following commutative diagram
$$\xymatrix{
    & 0  \ar[r]^{}& 0\ar[d]^{}\ar[r]^{} & 0 \ar[d]^{} \ar[r]^{} & \cdots \ar[r]^{}  & 0 \ar[d]^{} \ar[r]^{} & M\ar[d]^{\alpha} \ar[r]^{1} & M \ar@{=}[d]_{} \ar[r]^{} & 0 \\
   & 0  \ar[r]^{}& K_{n}\ar@{=}[d]^{}\ar[r]^{} & G_{n}\oplus P_{n-1} \ar@{=}[d]^{} \ar[r]^{} & \cdots  \ar[r]^{} &G_{1}\oplus P_{0} \ar@{=}[d]^{}\ar[r]^{\beta} & G_{0}\oplus M\ar[d]^{(1,0)} \ar[r]^{(g_{0},1)} & M \ar[d]_{} \ar[r]^{} & 0 \\
  & 0  \ar[r]^{}& K_{n}\ar[r]^{} & G_{n}\oplus P_{n-1} \ar[r]^{}& \cdots  \ar[r]^{}&G_{1}\oplus P_{0} \ar[r]^{(g_{1},f_{0})} & G_{0} \ar[r]^{} & 0 \ar[r]^{} & 0 }$$
where $\alpha=\begin{pmatrix}
0 \\
1 \\
\end{pmatrix}$ and $\beta=\begin{pmatrix}
g_{1} & f_{0} \\
0 &  -d_{0}\\
\end{pmatrix}$.
In fact, this is a short exact sequence of complexes. Thus the sequence
$0\rightarrow K_{n}\rightarrow G_{n}\oplus P_{n-1}\rightarrow \cdots\rightarrow G_{2}\oplus P_{1}\rightarrow G_{1}\oplus P_{0}\rightarrow G_{0}\rightarrow 0$ is exact with each $G_{0},~G_{i}\oplus P_{i-1}\in\mathcal{GP}$.
Because $\mathcal{GP}$ is closed under kernels of epimorphisms, $K_{n}$ is Gorenstein $(\mathcal{L}, \mathcal{A})$-projective.
\end{proof}

\begin{prop}\label{prop:2.4}{\it{$\mathcal{GP}^{\wedge}$ is closed under extensions.
}}
\end{prop}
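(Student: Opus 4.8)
The plan is to reduce the statement to the already-established fact that $\mathcal{GP}$ is closed under extensions, using the Horseshoe Lemma together with the previous proposition. Let $0\to M'\to M\to M''\to 0$ be a short exact sequence with $M',M''\in\mathcal{GP}^{\wedge}$, and let $n$ be the maximum of the $\mathcal{GP}$-resolution dimensions of $M'$ and $M''$. If $n=0$ then $M',M''\in\mathcal{GP}$ and hence $M\in\mathcal{GP}\subseteq\mathcal{GP}^{\wedge}$ because $\mathcal{GP}$ is closed under extensions; so I may assume $n\geq 1$.

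The key preliminary observation is the following strengthening of the previous proposition: if $N$ has $\mathcal{GP}$-resolution dimension $s$ and $0\to K_j\to P_{j-1}\to\cdots\to P_0\to N\to 0$ is exact with all $P_i\in\mathcal{P}$ and $j\geq s$, then $K_j\in\mathcal{GP}$. For $j=s$ this is precisely the previous proposition. For $j>s$ one argues by induction: $0\to K_j\to P_{j-1}\to K_{j-1}\to 0$ is exact with $P_{j-1}\in\mathcal{P}\subseteq\mathcal{GP}$ and $K_{j-1}\in\mathcal{GP}$ by the induction hypothesis, and since $\mathcal{GP}$ is closed under kernels of epimorphisms we get $K_j\in\mathcal{GP}$.

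Next I would pick projective resolutions of $M'$ and of $M''$ and feed them, together with the short exact sequence $0\to M'\to M\to M''\to 0$, into the Horseshoe Lemma. This yields a projective resolution $\cdots\to P_1\to P_0\to M\to 0$ with $P_i\cong P_i'\oplus P_i''$ whose $j$-th syzygy $K_j$ sits in a short exact sequence $0\to K_j'\to K_j\to K_j''\to 0$, where $K_j'$ and $K_j''$ are the corresponding syzygies of $M'$ and $M''$, for every $j\geq 1$. By the preliminary observation, $K_n'\in\mathcal{GP}$ and $K_n''\in\mathcal{GP}$ (using $n\geq$ the $\mathcal{GP}$-resolution dimensions of $M'$ and $M''$), and therefore $K_n\in\mathcal{GP}$ since $\mathcal{GP}$ is closed under extensions. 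Hence $0\to K_n\to P_{n-1}\to\cdots\to P_0\to M\to 0$ exhibits $M$ with finite $\mathcal{GP}$-resolution dimension, indeed at most $n$, so $M\in\mathcal{GP}^{\wedge}$.

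I do not expect a genuine obstacle: everything rests on the closure of $\mathcal{GP}$ under extensions and under kernels of epimorphisms, together with the previous proposition, with the Horseshoe Lemma supplying the bridge. The only point deserving attention is the standard but slightly fiddly verification that the Horseshoe construction really produces the stated short exact sequences on syzygies; since $R$-Mod has enough projectives, this is routine.
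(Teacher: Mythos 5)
Your proposal is correct and follows essentially the same route as the paper: both apply a Horseshoe-Lemma construction to projective resolutions of the two outer terms, identify the $n$-th syzygy of the middle term as an extension of the two outer $n$-th syzygies, and conclude via Proposition 3.6 together with the closure of $\mathcal{GP}$ under extensions. The only cosmetic differences are that the paper builds the Horseshoe diagram explicitly (lifting through a projective and using the Snake Lemma) and equalizes the two resolution dimensions ``without loss of generality,'' whereas you invoke the Horseshoe Lemma directly and justify the unequal-dimension case by your preliminary strengthening of Proposition 3.6, which is a sound (and arguably cleaner) way to make that reduction precise.
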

\begin{proof} Let $0\rightarrow L \stackrel{f}\rightarrow M\stackrel{g}\rightarrow N\rightarrow 0$ be a short exact sequence with $L,~N\in\mathcal{GP}^{\wedge}$.
Without loss of generality, we assume that the $\mathcal{GP}$-resolution dimension of $L$ and that of $N$ are equivalent, denoted by $n$.
By Proposition 3.6, there exist two exact sequences
$$0\rightarrow L_{n}\stackrel{d_{n}}\rightarrow P_{n-1}\stackrel{d_{n-1}}\rightarrow \cdots\rightarrow P_{1}\stackrel{d_{1}}\rightarrow P_{0}\stackrel{d_{0}}\rightarrow L\rightarrow 0,$$
$$0\rightarrow K_{n}\stackrel{q_{n}}\rightarrow Q_{n-1}\stackrel{q_{n-1}}\rightarrow \cdots\rightarrow Q_{1}\stackrel{q_{1}}\rightarrow Q_{0}\stackrel{q_{0}}\rightarrow N\rightarrow 0$$
with $P_{i},~Q_{i}\in\mathcal{P}$ and $L_{n},K_{n}\in\mathcal{GP}$.
Next, we construct a $\mathcal{GP}$-resolution of $M$ with length at most $n$.
We have exact sequences $0\rightarrow L_{1} \rightarrow P_{0}\stackrel{d_{0}}\rightarrow L\rightarrow 0$ and $0\rightarrow K_{1} \rightarrow Q_{0}\stackrel{q_{0}}\rightarrow N\rightarrow 0$, where $L_{1}=\ker d_{0}$ and $K_{1}=\ker q_{0}$. Consider the following diagram
$$\xymatrix{
 && 0\ar[d]^{} & 0\ar[d]^{} & 0\ar[d]^{}&\\
    & 0  \ar[r]^{}& L_{1}\ar[d]^{}\ar[r]^{} & M_{1} \ar[d]^{} \ar[r]^{} & K_{1} \ar[d]^{} \ar[r]^{} & 0 \\
   & 0  \ar[r]^{}& P_{0}\ar[d]^{d_{0}}\ar[r]^{\alpha} & P_{0}\oplus Q_{0}  \ar[d]^{(x,y)} \ar[r]^{(0,1)} &  Q_{0} \ar[d]^{q_{0}} \ar[r]^{} & 0 \\
  & 0  \ar[r]^{}& L\ar[d]^{}\ar[r]^{f} & M\ar[d]^{}\ar[r]^{g}& N \ar[d]^{}\ar[r]^{} & 0\\
   && 0 & 0 & 0 &}$$
where $\alpha=\begin{pmatrix}
1 \\
0 \\
\end{pmatrix}$ and $M_{1}=\ker (x,y)$. Put $x=fd_{0}$. Since $Q_{0}$ is projective, there exists a morphism $h:Q_{0}\rightarrow M$ such that $q_{0}=gh$. Let $y=h$. One easily checks that two below squares are commutative. By Snake Lemma, the first row is exact and $(x,y)$ is surjective. Repeating this process, we obtain an exact sequence
$$0\rightarrow M_{n}\rightarrow P_{n-1}\oplus Q_{n-1}\rightarrow \cdots\rightarrow P_{1}\oplus Q_{1}\rightarrow P_{0}\oplus Q_{0}\rightarrow M\rightarrow 0$$ with each $P_{i}\oplus Q_{i}\in\mathcal{P}$. Indeed, $M_{n}\in\mathcal{GP}$
since it occurs in the exact sequence $0\rightarrow L_{n} \rightarrow M_{n}\rightarrow K_{n}\rightarrow 0$ with $L_{n},~K_{n}\in\mathcal{GP}$, and $\mathcal{GP}$ is closed under extensions.
\end{proof}

We recall Hovey-Gillespie correspondence [10, Theorem 3.3].
Let $(\mathcal{B},\mathcal{E})$ be an exact category with an exact model structure.
Let $\mathcal{Q}$ be the class of cofibrant objects, $\mathcal{R}$ the class
of fibrant objects and $\mathcal{W}$ the class of trivial objects. Then $\mathcal{W}$ is a thick subcategory of $\mathcal{B}$ and both $(\mathcal{Q},\mathcal{R\cap W})$ and $(\mathcal{Q\cap W},\mathcal{R})$ are complete cotorsion pairs in $\mathcal{B}$. If $(\mathcal{B},\mathcal{E})$ is weakly idempotent complete, then the converse holds. That is, given a thick subcategory $\mathcal{W}$ and classes $\mathcal{Q}$ and $\mathcal{R}$ making $(\mathcal{Q},\mathcal{R\cap W})$ and $(\mathcal{Q\cap W},\mathcal{R})$ complete cotorsion pairs, then there is an
exact model structure on $\mathcal{B}$ where $\mathcal{Q}$ is the class of cofibrant objects, $\mathcal{R}$ is the class of fibrant objects and $\mathcal{W}$ is the class of trivial objects. Hence, we denote an exact model structure as a triple $(\mathcal{Q},\mathcal{W},\mathcal{R})$, and call it a Hovey triple.
If readers want to further learn model structures, please refer to [10, 11, 12, 13, 16].

We have the following example.

\begin{exa}\label{prop:2.4}{\rm{$(\mathcal{GP},\mathcal{P}^{\wedge})$ is a left AB-context in $R$-Mod. Furthermore, $(\mathcal{GP},\mathcal{P})$ is a strong left Frobenius pair by [1, Proposition 6.10]. In addition, $\mathcal{GP}$ is a Frobenius category with $\mathcal{P}$ a class of relative projective-injective objects and $\underline{\mathcal{GP}}:= \mathcal{GP}/\mathcal{P}$ is a triangulated category. There exists a Frobenius model structure $(\mathcal{GP},\mathcal{P},\mathcal{GP})$ on the exact category $\mathcal{GP}$, where $\mathcal{GP}$ is the class of cofibrant objects, $\mathcal{P}$ is the class of trivial objects and $\mathcal{GP}$ is the class of fibrant objects.
There exists an exact model structure $(\mathcal{GP},\mathcal{P}^{\wedge},\mathcal{GP}^{\wedge})$ on the exact category $\mathcal{GP}^{\wedge}$, where $\mathcal{GP}$ is the class of cofibrant objects, $\mathcal{P}^{\wedge}$ is the class of trivial objects and $\mathcal{GP}^{\wedge}$ is the class of fibrant objects.
}}
\end{exa}

\begin{rem}\label{prop:2.4}{\rm{The canonical example of a complete duality pair is the level duality pair $(\mathcal{L},\mathcal{A})$ over any ring, given in [3], where $\mathcal{L}$ represents the class of level modules and $\mathcal{A}$ represents the class of absolutely clean modules. In this case, the class of Gorenstein $(\mathcal{L}, \mathcal{A})$-projective modules coincides with the class of Gorenstein AC-projective modules, denoted by $\mathcal{G}_{ac}\mathcal{P}$ (refer to [3]). We obtain that $(\mathcal{G}_{ac}\mathcal{P},\mathcal{P}^{\wedge})$ is a left AB-context in $R$-Mod. $(\mathcal{G}_{ac}\mathcal{P},\mathcal{P})$ is a strong left Frobenius pair
by [1, Corollary 6.11]. Furthermore, $\mathcal{G}_{ac}\mathcal{P}$ is a Frobenius category with $\mathcal{P}$ a class of relative projective-injective objects and hence $\underline{\mathcal{G}_{ac}\mathcal{P}}:= \mathcal{G}_{ac}\mathcal{P}/\mathcal{P}$ is a triangulated category. Moreover, there exists a Frobenius model structure $(\mathcal{G}_{ac}\mathcal{P},\mathcal{P},\mathcal{G}_{ac}\mathcal{P})$ on the exact category $\mathcal{G}_{ac}\mathcal{P}$ and there exists an exact model structure $(\mathcal{G}_{ac}\mathcal{P},\mathcal{P}^{\wedge},\mathcal{G}_{ac}\mathcal{P}^{\wedge})$ on the exact category $\mathcal{G}_{ac}\mathcal{P}^{\wedge}$.
}}
\end{rem}

Actually, we can get the results that $(\mathcal{GP},\mathcal{F})$ and $(\mathcal{GP},\mathcal{L})$ are left Frobenius pairs when $\mathcal{F}\subseteq\mathcal{GP}$ and $\mathcal{L}\subseteq\mathcal{GP}$ respectively. In the following, we give some characterizations of rings via Frobenius pairs.

Recall $R$ is a left perfect ring if every flat left $R$-module is projective.

\begin{prop}\label{prop:2.4}{\it{The following conditions are equivalent for any ring $R$.

(1) $(\mathcal{GP},\mathcal{F})$ is a strong left Frobenius pair.

(2) $(\mathcal{GP},\mathcal{F})$ is a left Frobenius pair.

(3) $R$ is a left perfect ring.
}}
\end{prop}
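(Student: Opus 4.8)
The plan is to prove the cycle $(1)\Rightarrow(2)\Rightarrow(3)\Rightarrow(1)$. The implication $(1)\Rightarrow(2)$ is trivial, since a strong left Frobenius pair is by definition a left Frobenius pair. For $(2)\Rightarrow(3)$, I would exploit the fact that in a left Frobenius pair $(\mathcal{X},\omega)$ one has $\omega=\mathcal{X}\cap\mathcal{X}^{\wedge}$ and, more to the point here, $\omega$ is an $\mathcal{X}$-injective relative cogenerator in $\mathcal{X}$ contained in $\mathcal{X}$. Applying this with $\mathcal{X}=\mathcal{GP}$ and $\omega=\mathcal{F}$ forces $\mathcal{F}\subseteq\mathcal{GP}$. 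But by Proposition 3.4 (the ``above proposition'' giving $\mathcal{GP}\cap\mathcal{F}=\mathcal{P}$), any flat module that is Gorenstein $(\mathcal{L},\mathcal{A})$-projective is already projective; hence $\mathcal{F}=\mathcal{F}\cap\mathcal{GP}=\mathcal{P}$, which is exactly the statement that $R$ is left perfect.

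For $(3)\Rightarrow(1)$, assume $R$ is left perfect, so $\mathcal{F}=\mathcal{P}$. Then $(\mathcal{GP},\mathcal{F})=(\mathcal{GP},\mathcal{P})$, and the conclusion is precisely the content of Example 3.8, where it is recorded that $(\mathcal{GP},\mathcal{P})$ is a strong left Frobenius pair (via [1, Proposition 6.10] and the fact that $(\mathcal{GP},\mathcal{P}^{\wedge})$ is a left AB-context). So this implication reduces immediately to an already-established fact once the identification $\mathcal{F}=\mathcal{P}$ is made.

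I expect the only real content to be in $(2)\Rightarrow(3)$, and even there the work is light: the key observation is simply that membership of $\omega$ inside $\mathcal{X}$ in the definition of a left Frobenius pair, combined with $\mathcal{GP}\cap\mathcal{F}=\mathcal{P}$ from Proposition 3.4, collapses $\mathcal{F}$ onto $\mathcal{P}$. I should double-check that $\mathcal{F}$ genuinely satisfies the remaining axioms in cases of interest (closure under direct summands is standard for flats), but for the equivalence itself those are not needed in the direction $(2)\Rightarrow(3)$ — only the containment $\omega\subseteq\mathcal{X}$ is used. The main thing to be careful about is to invoke the right earlier result (Proposition 3.4 / the displayed equality following it) for the identity $\mathcal{GP}\cap\mathcal{F}=\mathcal{P}$, and Example 3.8 for the reverse direction, rather than re-deriving either.
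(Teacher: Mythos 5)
Your proposal is correct and matches the paper's proof in essence: for (2)$\Rightarrow$(3) you use $\mathcal{F}\subseteq\mathcal{GP}$ (forced by the relative-cogenerator axiom) together with $\mathcal{GP}\cap\mathcal{F}=\mathcal{P}$ from Proposition 3.4, while the paper simply re-runs that same splitting argument inline via the short exact sequence of Proposition 3.1 and $\mathrm{Ext}^{1}_{R}(G,M)=0$; and for (3)$\Rightarrow$(1) both you and the paper reduce, once $\mathcal{F}=\mathcal{P}$, to $(\mathcal{GP},\mathcal{P})$ being a strong left Frobenius pair by [1, Proposition 6.10]. The only quibble is your unused aside that $\omega=\mathcal{X}\cap\mathcal{X}^{\wedge}$ for a left Frobenius pair, which is false as stated (the Auslander--Buchweitz identity is $\omega=\mathcal{X}\cap\omega^{\wedge}$), but since you rely only on $\omega\subseteq\mathcal{X}$ the argument is unaffected.
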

\begin{proof} (1) $\Rightarrow$ (2) is straightforward.

(2) $\Rightarrow$ (3) Since $(\mathcal{GP},\mathcal{F})$ is a left Frobenius pair,
$\mathcal{F}$ is a relative cogenerator in $\mathcal{GP}$ and
hence $\mathcal{F}\subseteq\mathcal{GP}$. For any $M\in\mathcal{F}$, there exists a short exact sequence $0\rightarrow M\rightarrow P\rightarrow G\rightarrow 0$ with $P\in\mathcal{P}$ and $G\in\mathcal{GP}$ by Proposition 3.1.
Since $\mathrm{Ext}^{1}_{R}(G,M)=0$, $0\rightarrow M\rightarrow P\rightarrow G\rightarrow 0$ splits. Thus $M$ is projective. So $R$ is a left perfect ring.

When $R$ is a left perfect ring, $\mathcal{F}$ is exactly $\mathcal{P}$.
By [1, Proposition 6.10], (3) implies (1).
\end{proof}

Immediately, we obtain the next simple result.

\begin{prop}\label{prop:2.4}{\it{The following conditions are equivalent for any ring $R$.

(1) $(\mathcal{GP},\mathcal{L})$ is a strong left Frobenius pair.

(2) $(\mathcal{GP},\mathcal{L})$ is a left Frobenius pair.

(3) $\mathcal{L}=\mathcal{P}$.

In this case, $R$ is a left perfect ring.
}}
\end{prop}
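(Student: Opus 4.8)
The plan is to prove the chain of implications (1)$\Rightarrow$(2)$\Rightarrow$(3)$\Rightarrow$(1), mirroring the structure of the preceding Proposition, and then to deduce the final assertion ``$R$ is a left perfect ring'' from (3) via Proposition~3.11. The implication (1)$\Rightarrow$(2) is immediate from the definition, since a strong left Frobenius pair is in particular a left Frobenius pair.

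For (2)$\Rightarrow$(3), I would argue exactly as in the proof of Proposition~3.11(2)$\Rightarrow$(3). If $(\mathcal{GP},\mathcal{L})$ is a left Frobenius pair, then $\mathcal{L}$ is a relative cogenerator in $\mathcal{GP}$, so in particular $\mathcal{L}\subseteq\mathcal{GP}$. Then every $M\in\mathcal{L}$ lies in $\mathcal{GP}\cap\mathcal{L}$, which by Proposition~3.4 (and the displayed consequence that $\mathcal{GP}\cap\mathcal{L}=\mathcal{P}$) equals $\mathcal{P}$. Hence $\mathcal{L}\subseteq\mathcal{P}$. For the reverse inclusion $\mathcal{P}\subseteq\mathcal{L}$, I would invoke the fact recorded in Section~3 that the perfect duality pair $(\mathcal{L},\mathcal{A})$ satisfies $\mathcal{P}\subseteq\mathcal{F}\subseteq\mathcal{L}$ (by [13, Proposition~2.3]). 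Therefore $\mathcal{L}=\mathcal{P}$.

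For (3)$\Rightarrow$(1): if $\mathcal{L}=\mathcal{P}$, then $(\mathcal{GP},\mathcal{L})=(\mathcal{GP},\mathcal{P})$, and by Example~3.8 (which rests on [1, Proposition~6.10]) $(\mathcal{GP},\mathcal{P})$ is a strong left Frobenius pair. This closes the cycle. Finally, for the last sentence, note that $\mathcal{L}=\mathcal{P}$ forces $\mathcal{F}\subseteq\mathcal{L}=\mathcal{P}$ (again using $\mathcal{F}\subseteq\mathcal{L}$ from [13, Proposition~2.3]), i.e.\ every flat left $R$-module is projective, which is precisely the statement that $R$ is left perfect; alternatively one can simply note $\mathcal{L}=\mathcal{P}$ implies $(\mathcal{GP},\mathcal{F})=(\mathcal{GP},\mathcal{L})$ is a left Frobenius pair and apply Proposition~3.11.

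I do not expect a serious obstacle here: the whole argument is a recombination of Proposition~3.4, Example~3.8, Proposition~3.11, and the inclusion $\mathcal{P}\subseteq\mathcal{F}\subseteq\mathcal{L}$. The only point requiring a little care is making sure the equality $\mathcal{GP}\cap\mathcal{L}=\mathcal{P}$ is used correctly to extract $\mathcal{L}\subseteq\mathcal{P}$ from $\mathcal{L}\subseteq\mathcal{GP}$ — this is exactly what the remark following Proposition~3.4 provides, so it is genuinely routine.
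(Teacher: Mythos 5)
Your argument is correct and is exactly the routine combination the paper has in mind: the paper gives no written proof (it declares the result ``immediate'' from the preceding proposition on $(\mathcal{GP},\mathcal{F})$, the equality $\mathcal{GP}\cap\mathcal{L}=\mathcal{P}$ from Proposition~3.4, the inclusion $\mathcal{P}\subseteq\mathcal{F}\subseteq\mathcal{L}$, and [1, Proposition~6.10]), and your cycle (1)$\Rightarrow$(2)$\Rightarrow$(3)$\Rightarrow$(1) plus the left-perfect conclusion reproduces precisely that reasoning. The only cosmetic point is a reference-numbering slip: the proposition on $(\mathcal{GP},\mathcal{F})$ you cite is Proposition~3.10 in the paper, while the statement under discussion is its Proposition~3.11.
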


It is known that $R$ is a right coherent ring if and only if all level left $R$-modules are flat. In the following, we give an example.

\begin{exa}\label{prop:2.4}{\rm{Let $(\mathcal{L},\mathcal{A})$ be the level duality pair  over any ring $R$. Then the following conditions are equivalent.

(1) $(\mathcal{G}_{ac}\mathcal{P},\mathcal{L})$ is a strong left Frobenius pair.

(2) $(\mathcal{G}_{ac}\mathcal{P},\mathcal{L})$ is a left Frobenius pair.

(3) $\mathcal{L}=\mathcal{P}$.

(4) $R$ is a right coherent and left perfect ring.
}}
\end{exa}

Recall an $R$-module $M$ is called Gorenstein $(\mathcal{L}, \mathcal{A})$-flat if there exists a $\mathcal{A}{\otimes}_{R}-$-exact exact sequence $$\mathbb{F}:\cdots\rightarrow F_{1}\rightarrow F_{0}\rightarrow F_{-1}\rightarrow F_{-2}\rightarrow \cdots$$
with each $F_{i}\in\mathcal{F}$ such that $M\cong$~Ker$(F_{-1}\rightarrow F_{-2})$. $\mathcal{GF}$ denotes the class of all Gorenstein $(\mathcal{L}, \mathcal{A})$-flat modules (see [13]).

Many scholars have been interested in when the Gorenstein projective module is Gorenstein flat in Gorenstein homological algebra for two decades. So far, this problem is still open. Moreover, relative results were given under certain circumstance. But
when the Gorenstein $(\mathcal{L}, \mathcal{A})$-projective module is
Gorenstein $(\mathcal{L}, \mathcal{A})$-flat is trivial.

\begin{rem}\label{prop:2.4}{\rm{By [3, Theorem A.6], $\mathrm{Hom}_{R}(\mathbb{P},\mathcal{L})$ is exact if and only if $\mathcal{A}{\otimes}_{R}\mathbb{P}$ is exact, where $\mathbb{P}$ is an exact sequence of projective modules. Then it follows from definitions of Gorenstein $(\mathcal{L}, \mathcal{A})$-projective and Gorenstein $(\mathcal{L}, \mathcal{A})$-flat modules that
a Gorenstein $(\mathcal{L}, \mathcal{A})$-projective module is
Gorenstein $(\mathcal{L}, \mathcal{A})$-flat over any ring. In particular, a Gorenstein AC-projective module is
Gorenstein AC-flat.
}}
\end{rem}

In [18], authors introduced and studied a kind of Gorenstein $(\mathcal{X}, \mathcal{Y})$-flat module with respect to two classes of modules $\mathcal{X}$ and $\mathcal{Y}$.
An $R$-module $M$ is called Gorenstein $(\mathcal{X}, \mathcal{Y})$-flat if there exists a $\mathcal{Y}{\otimes}_{R}-$-exact exact sequence $$\mathbb{X}:\cdots\rightarrow X_{1}\rightarrow X_{0}\rightarrow X_{-1}\rightarrow X_{-2}\rightarrow \cdots$$
with each $X_{i}\in\mathcal{X}$ such that $M\cong$~Ker$(X_{-1}\rightarrow X_{-2})$. Authors use $\mathcal{GF_{(X,Y)}}(R)$ to denote the class of all Gorenstein $(\mathcal{X}, \mathcal{Y})$-flat modules.

\begin{rem}\label{prop:2.4}{\rm{It is well known that $\mathcal{P}\subseteq\mathcal{GP}\subseteq\mathcal{GF}$ and $\mathcal{P}\subseteq\mathcal{F}\subseteq\mathcal{GF}$.
If $\mathcal{X}=\mathcal{L}$ and $\mathcal{Y}=\mathcal{A}$, then $\mathcal{P}\subseteq\mathcal{F}\subseteq\mathcal{L}\subseteq\mathcal{GF_{(L,A)}}(R)$ and
$\mathcal{GP}\subseteq\mathcal{GF}\subseteq\mathcal{GF_{(L,A)}}(R)$.
Furthermore, if $\mathcal{L}$ is the class of flat left $R$-modules and $\mathcal{A}$ is the class of injective right $R$-modules over a right noetherian ring, then $\mathcal{GF}=\mathcal{GF_{(L,A)}}(R)$.
}}
\end{rem}

Complete cotorsion pairs play an important role in constructing model structures.
When $(\mathcal{L},\mathcal{A})$ is a perfect duality pair, $(\mathcal{L},\mathcal{L}^{\perp})$ is a perfect cotorsion pair. This fact builds the bridge between duality pairs and cotorsion pairs. In [18, Proposition 2.18], authors gave some equivalent characterizations of $\mathcal{GF_{(X,Y)}}(R)$ under some conditions, where $(\mathcal{X},\mathcal{Y})$ is a complete duality pair. We introduce the definition of Gorenstein $\mathcal{X}$-objects with respect to a cotorsion pair and use $\mathcal{G}(\mathcal{X})$ to represent the class of Gorenstein $\mathcal{X}$-objects
in [19]. We observe that $\mathcal{GF_{(X,Y)}}(R)$ is exactly $\mathcal{G}(\mathcal{X})$ under the conditions of [18, Proposition 2.18]. So we will obtain exact model structures associated to $\mathcal{GF_{(X,Y)}}(R)$.

Let $(\mathcal{C}, \mathcal{D})$ be a complete and hereditary cotorsion pair. For convenience, recall that an $R$-module $M$ is said to be a Gorenstein $\mathcal{C}$-object if there exists a $\mathrm{Hom}_{R}(-,\mathcal{C\cap D})$-exact exact sequence $$\mathbb{C}:\cdots\rightarrow C_{1}\rightarrow C_{0}\rightarrow C_{-1}\rightarrow C_{-2}\rightarrow \cdots$$
with each $C_{i}\in\mathcal{C}$ such that $M\cong$~Ker$(C_{-1}\rightarrow C_{-2})$ (see [19]).

In the following, we give exact model structures associated to $\mathcal{GF_{(X,Y)}}(R)$.

\begin{thm}\label{prop:2.4}{\it{Let $(\mathcal{X},\mathcal{Y})$ be a complete duality pair, $\mathcal{X}$ closed under kernels of epimorphisms and $\mathrm{Tor}^{R}_{i}(Y,X)=0$ for all $Y\in\mathcal{Y}$, $X\in\mathcal{X}$ and $i\geq 1$. Then $(\mathcal{X},\mathcal{X\cap X^{\perp}})$ is a left Frobenius pair and
$(\mathcal{GF_{(X,Y)}}(R),(\mathcal{X\cap X^{\perp}})^{\wedge})$ is a left AB-context in $R$-Mod. Furthermore, assume that $\mathcal{X^{\perp}}$ is closed under kernels of epimorphisms, then $(\mathcal{GF_{(X,Y)}}(R)\cap \mathcal{X}^{\perp},\mathcal{X\cap X^{\perp}})$ is a strong left Frobenius pair. In addition, $\mathcal{GF_{(X,Y)}}(R)\cap \mathcal{X}^{\perp}$ is a Frobenius category with $\mathcal{X\cap X^{\perp}}$ a class of relative projective-injective objects and hence $\underline{\mathcal{GF_{(X,Y)}}(R)\cap \mathcal{X}^{\perp}}:= \mathcal{GF_{(X,Y)}}(R)\cap \mathcal{X}^{\perp}/\mathcal{X\cap X^{\perp}}$ is a triangulated category. There exists a Frobenius model structure $(\mathcal{GF_{(X,Y)}}(R)\cap \mathcal{X}^{\perp},\mathcal{X\cap X^{\perp}},\mathcal{GF_{(X,Y)}}(R)\cap \mathcal{X}^{\perp})$ on the exact category $\mathcal{GF_{(X,Y)}}(R)\cap \mathcal{X}^{\perp}$.
Moreover, there exists an exact model structure $(\mathcal{GF_{(X,Y)}}(R)\cap \mathcal{X}^{\perp},(\mathcal{X\cap X^{\perp}})^{\wedge},(\mathcal{GF_{(X,Y)}}(R)\cap \mathcal{X}^{\perp})^{\wedge})$ on the exact category $(\mathcal{GF_{(X,Y)}}(R)\cap \mathcal{X}^{\perp})^{\wedge}$, where $\mathcal{GF_{(X,Y)}}(R)\cap \mathcal{X}^{\perp}$ is the class of cofibrant objects, $(\mathcal{X\cap X^{\perp}})^{\wedge}$ is the class of trivial objects and $(\mathcal{GF_{(X,Y)}}(R)\cap \mathcal{X}^{\perp})^{\wedge}$ is the class of fibrant objects.
}}
\end{thm}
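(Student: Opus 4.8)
The plan is to reduce everything to the Frobenius-pair machinery of [1] by showing that $\mathcal{GF_{(X,Y)}}(R)$ coincides with the class $\mathcal{G}(\mathcal{X})$ of Gorenstein $\mathcal{X}$-objects relative to the complete hereditary cotorsion pair $(\mathcal{X},\mathcal{X}^{\perp})$. First I would verify the hypotheses: since $(\mathcal{X},\mathcal{Y})$ is a complete (hence perfect) duality pair, $(\mathcal{X},\mathcal{X}^{\perp})$ is a perfect cotorsion pair, and the Tor-vanishing assumption $\mathrm{Tor}^{R}_{i}(Y,X)=0$ together with $\mathcal{X}$ closed under kernels of epimorphisms forces this cotorsion pair to be hereditary. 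The key identification $\mathcal{GF_{(X,Y)}}(R)=\mathcal{G}(\mathcal{X})$ is exactly the content of [18, Proposition 2.18] read through the definition of Gorenstein $\mathcal{C}$-objects recalled above: a $\mathcal{Y}\otimes_{R}-$-exact exact complex of modules in $\mathcal{X}$ is the same as a $\mathrm{Hom}_{R}(-,\mathcal{X}\cap\mathcal{X}^{\perp})$-exact one, because $\mathcal{X}\cap\mathcal{X}^{\perp}\subseteq\mathcal{X}$ and the duality-pair symmetry converts $\mathrm{Hom}_{R}(-,\mathcal{L})$-exactness into $\mathcal{A}\otimes_{R}-$-exactness (the analogue of Remark 3.15, i.e. [3, Theorem A.6]).

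Next I would feed this into the results of [5, 1]. Once $\mathcal{GF_{(X,Y)}}(R)=\mathcal{G}(\mathcal{X})$, the statement that $(\mathcal{X},\mathcal{X}\cap\mathcal{X}^{\perp})$ is a left Frobenius pair is immediate: $\mathcal{X}$ is left thick (closed under summands, extensions, and kernels of epimorphisms by the hereditary hypothesis), and $\omega:=\mathcal{X}\cap\mathcal{X}^{\perp}$ is an $\mathcal{X}$-injective relative cogenerator because the cotorsion pair is complete and hereditary — the enveloping sequences $0\to X\to Y'\to X'\to 0$ with $Y'\in\mathcal{X}^{\perp}$ can be chosen with $Y'\in\omega$ after noting $X'\in\mathcal{X}$ forces $Y'\in\mathcal{X}$. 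Then $(\mathcal{G}(\mathcal{X}),\omega^{\wedge})$ being a left AB-context, and $(\mathcal{G}(\mathcal{X})\cap\mathcal{X}^{\perp},\omega)$ being a \emph{strong} left Frobenius pair when $\mathcal{X}^{\perp}$ is also closed under kernels of epimorphisms, are the relative-Gorenstein analogues of Example 3.8 and its strong refinement; these are precisely the theorems proved in [5] about Gorenstein objects of a cotorsion pair, so I would cite them rather than reprove them.

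Finally, the model-structure and triangulated-category conclusions follow formally. Being a strong left Frobenius pair, $(\mathcal{G}(\mathcal{X})\cap\mathcal{X}^{\perp},\omega)$ makes $\mathcal{G}(\mathcal{X})\cap\mathcal{X}^{\perp}$ a Frobenius exact category with $\omega$ as the class of projective-injective objects, so $\underline{\mathcal{G}(\mathcal{X})\cap\mathcal{X}^{\perp}}$ is triangulated; the Frobenius model structure $(\mathcal{G}(\mathcal{X})\cap\mathcal{X}^{\perp},\omega,\mathcal{G}(\mathcal{X})\cap\mathcal{X}^{\perp})$ comes from the trivial Hovey triple on a Frobenius category, and the exact model structure on $(\mathcal{G}(\mathcal{X})\cap\mathcal{X}^{\perp})^{\wedge}$ with trivial objects $\omega^{\wedge}$ is obtained from the left AB-context via the Hovey--Gillespie correspondence recalled above, exactly as in Example 3.8. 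The main obstacle I anticipate is purely bookkeeping: checking carefully that the Tor-vanishing hypothesis is exactly what is needed both to make $(\mathcal{X},\mathcal{X}^{\perp})$ hereditary and to make the $\mathcal{Y}\otimes_{R}-$-exactness in the definition of $\mathcal{GF_{(X,Y)}}(R)$ match the $\mathrm{Hom}_{R}(-,\omega)$-exactness in the definition of $\mathcal{G}(\mathcal{X})$ — i.e. that [18, Proposition 2.18] applies verbatim — and verifying that the second closure hypothesis on $\mathcal{X}^{\perp}$ is what upgrades the Frobenius pair from left to strong (it guarantees $\omega$ is also an $\mathcal{X}$-projective relative generator). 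Everything else is a direct application of the cited results.
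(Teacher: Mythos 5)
Your proposal is correct and follows essentially the same route as the paper: show $(\mathcal{X},\mathcal{X}^{\perp})$ is a perfect (hence complete) and hereditary cotorsion pair, deduce that $(\mathcal{X},\mathcal{X}\cap\mathcal{X}^{\perp})$ is a left Frobenius pair, identify $\mathcal{GF_{(X,Y)}}(R)$ with $\mathcal{G}(\mathcal{X})$ via [18, Proposition 2.18], and then invoke [5] and the Hovey--Gillespie correspondence for the AB-context, the strong Frobenius pair, the Frobenius category, and the two model structures. The only cosmetic difference is that heredity needs only the closure of $\mathcal{X}$ under kernels of epimorphisms ([12, Lemma 2.4]), the Tor-vanishing hypothesis being used solely to apply [18, Proposition 2.18].
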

\begin{proof}For one thing, since $(\mathcal{X},\mathcal{Y})$ is a perfect duality pair,
$(\mathcal{X},\mathcal{X}^{\perp})$ is a perfect cotorsion pair by [15, Theorem 3.1]. Naturally, $(\mathcal{X},\mathcal{X}^{\perp})$ is a complete cotorsion pair. For another, since $\mathcal{X}$ is closed under kernels of epimorphisms, $(\mathcal{X},\mathcal{X}^{\perp})$ is a hereditary cotorsion pair by [12, Lemma 2.4].
Namely, $(\mathcal{X},\mathcal{X}^{\perp})$ is a complete and hereditary cotorsion pair.
Since $(\mathcal{X},\mathcal{X}^{\perp})$ is hereditary, $\mathrm{Ext}^{i}_{R}(X,Z)=0$ for all $X\in\mathcal{X}$, $Z\in\mathcal{X}^{\perp}$ and $i\geq 1$. Then
id$_{\mathcal{X}}\mathcal{X\cap X^{\perp}}=0$, namely, $\mathcal{X\cap X^{\perp}}$ is $\mathcal{X}$-injective. Obviously, $\mathcal{X\cap X^{\perp}}\subseteq \mathcal{X}$.
For any $X\in\mathcal{X}$, there exists a short exact sequence $0\rightarrow X\rightarrow Z\rightarrow X^{'}\rightarrow 0$ with $Z\in\mathcal{X^{\perp}}$ and $X^{'}\in\mathcal{X}$ by the completeness of $(\mathcal{X},\mathcal{X}^{\perp})$.
Because $\mathcal{X}$ is closed under extensions, $Z\in\mathcal{X\cap X^{\perp}}$.
Thus $\mathcal{X\cap X^{\perp}}$ is a relative cogenerator in $\mathcal{X}$. It is clear that $\mathcal{X}$ is left thick and $\mathcal{X\cap X^{\perp}}$ is closed under direct summands. So $(\mathcal{X},\mathcal{X\cap X^{\perp}})$ is a left Frobenius pair.
By [18, Proposition 2.18], $M$ is in $\mathcal{GF_{(X,Y)}}(R)$ if and only if there exists an exact sequence of left $R$-modules in $\mathcal{X}$
$$\mathbb{X}:\cdots\rightarrow X_{1}\rightarrow X_{0}\rightarrow X_{-1}\rightarrow X_{-2}\rightarrow \cdots$$
such that $M\cong$~Ker$(X_{-1}\rightarrow X_{-2})$ and   $\mathrm{Hom}_{R}(-,\mathcal{X\cap X^{\perp}})$ leaves the sequence exact.
Comparing with the definition of Gorenstein $\mathcal{X}$-objects, we obtain
that $\mathcal{GF_{(X,Y)}}(R)$ is exactly $\mathcal{G}(\mathcal{X})$. By [5, Proposition 4.1], $(\mathcal{GF_{(X,Y)}}(R),(\mathcal{X\cap X^{\perp}})^{\wedge})$ is a left AB-context in $R$-Mod. Assume that $\mathcal{X^{\perp}}$ is closed under kernels of epimorphisms. By [5, Theorem 4.2], $(\mathcal{GF_{(X,Y)}}(R)\cap \mathcal{X}^{\perp},\mathcal{X\cap X^{\perp}})$ is a strong left Frobenius pair, $\mathcal{GF_{(X,Y)}}(R)\cap \mathcal{X}^{\perp}$ is a Frobenius category with $\mathcal{X\cap X^{\perp}}$ a class of relative projective-injective objects and $\underline{\mathcal{GF_{(X,Y)}}(R)\cap \mathcal{X}^{\perp}}:= \mathcal{GF_{(X,Y)}}(R)\cap \mathcal{X}^{\perp}/\mathcal{X\cap X^{\perp}}$ is a triangulated category. Thus there exists a Frobenius model structure $(\mathcal{GF_{(X,Y)}}(R)\cap\mathcal{X}^{\perp},\mathcal{X\cap X^{\perp}},\mathcal{GF_{(X,Y)}}(R)\cap \mathcal{X}^{\perp})$ on the exact category $\mathcal{GF_{(X,Y)}}(R)\cap \mathcal{X}^{\perp}$.
At the same time, there exists an exact model structure $(\mathcal{GF_{(X,Y)}}(R)\cap \mathcal{X}^{\perp},(\mathcal{X\cap X^{\perp}})^{\wedge},(\mathcal{GF_{(X,Y)}}(R)\cap \mathcal{X}^{\perp})^{\wedge})$ on the exact category $(\mathcal{GF_{(X,Y)}}(R)\cap \mathcal{X}^{\perp})^{\wedge}$, where $\mathcal{GF_{(X,Y)}}(R)\cap \mathcal{X}^{\perp}$ is the class of cofibrant objects, $(\mathcal{X\cap X^{\perp}})^{\wedge}$ is the class of trivial objects and $(\mathcal{GF_{(X,Y)}}(R)\cap \mathcal{X}^{\perp})^{\wedge}$ is the class of fibrant objects.
\end{proof}

\section{strongly Gorenstein $(\mathcal{L}, \mathcal{A})$-projective modules}
In this section, we introduce and investigate strongly Gorenstein $(\mathcal{L}, \mathcal{A})$-projective modules with respect to a fixed complete duality pair $(\mathcal{L}, \mathcal{A})$.
Meanwhile, we attempt to construct some relevant model structures and recollements.

\begin{df}\label{df:2.1} {\rm An $R$-module $M$ is called strongly Gorenstein $(\mathcal{L}, \mathcal{A})$-projective if there exists a $\mathrm{Hom}_{R}(-,\mathcal{L})$-exact exact sequence $$\mathbb{P}:\cdots\rightarrow P\stackrel{f}\rightarrow P\stackrel{f}\rightarrow P\stackrel{f}\rightarrow P\stackrel{f}\rightarrow \cdots$$
with $P\in\mathcal{P}$ such that $M\cong\ker f$.}
\end{df}

We use $\mathcal{SGP}$ to denote the class of all strongly Gorenstein $(\mathcal{L}, \mathcal{A})$-projective modules. Immediately,
we get $\mathcal{SGP}\subseteq\mathcal{GP}$. It is easy to obtain
that $\mathcal{SGP}$ is closed under direct sums.

In the following, we give some homological characterizations of $\mathcal{SGP}$.

\begin{prop}\label{prop:2.4}{\it{The following conditions are equivalent for an $R$-module $M$.

(1) $M$ is strongly Gorenstein $(\mathcal{L}, \mathcal{A})$-projective.

(2) There exists a $\mathrm{Hom}_{R}(-,\mathcal{L})$-exact exact sequence $0\rightarrow M\rightarrow P\rightarrow M\rightarrow 0$ with $P\in\mathcal{P}$.

(3) There exists an exact sequence $0\rightarrow M\rightarrow P\rightarrow M\rightarrow 0$ with $P\in\mathcal{P}$ and $\mathrm{Ext}^{1}_{R}(M,L)=0$ for all $L\in\mathcal{L}$.

(4) There exists an exact sequence $0\rightarrow M\rightarrow P\rightarrow M\rightarrow 0$ with $P\in\mathcal{P}$ and $\mathrm{Ext}^{i}_{R}(M,L)=0$ for all $L\in\mathcal{L}$ and $i\geq 1$.
}}
\end{prop}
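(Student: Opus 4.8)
The plan is to prove the cycle of implications $(1)\Rightarrow(2)\Rightarrow(3)\Rightarrow(4)\Rightarrow(1)$, which is the standard pattern for this kind of characterization and keeps each step short. The implications $(1)\Rightarrow(2)$, $(2)\Rightarrow(3)$ and $(4)\Rightarrow(3)$ are essentially immediate: if $M\cong\ker f$ for the periodic complex $\mathbb{P}$ in Definition~\ref{df:2.1}, then cutting $\mathbb{P}$ at two consecutive copies of $P$ produces the short exact sequence $0\rightarrow M\rightarrow P\stackrel{f}\rightarrow M\rightarrow 0$, and this sequence is $\mathrm{Hom}_R(-,\mathcal{L})$-exact because $\mathbb{P}$ is; this gives $(1)\Rightarrow(2)$. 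For $(2)\Rightarrow(3)$ one only has to recall that $\mathrm{Hom}_R(-,\mathcal{L})$-exactness of $0\rightarrow M\rightarrow P\rightarrow M\rightarrow 0$ forces $\mathrm{Ext}^1_R(M,L)=0$ for all $L\in\mathcal{L}$ (apply $\mathrm{Hom}_R(-,L)$ and look at the connecting map). The chain $(4)\Rightarrow(3)$ is trivial since $(4)$ just asserts more vanishing.

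The two substantive steps are $(3)\Rightarrow(4)$ and $(3)\Rightarrow(1)$ (equivalently $(4)\Rightarrow(1)$; I will in fact derive $(1)$ directly from $(3)$, which also yields $(4)$ as a byproduct). For $(3)\Rightarrow(4)$, start from $0\rightarrow M\xrightarrow{\iota} P\rightarrow M\rightarrow 0$ with $\mathrm{Ext}^1_R(M,L)=0$. To get the higher Ext-vanishing, splice this sequence with itself: since the cokernel is again $M$, one can iterate to build an exact sequence $0\rightarrow M\rightarrow P\rightarrow P\rightarrow\cdots\rightarrow P\rightarrow M\rightarrow 0$ of any finite length with all middle terms $P\in\mathcal{P}$, and also a projective resolution $\cdots\rightarrow P\rightarrow P\rightarrow M\rightarrow 0$ whose $n$-th syzygy is again $M$. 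Then $\mathrm{Ext}^i_R(M,L)$ is computed as $\mathrm{Ext}^1_R(M,L)$ after a dimension shift along this self-resolution, so it vanishes for all $i\geq 1$ and all $L\in\mathcal{L}$. This gives $(4)$ (and simultaneously shows the $\mathrm{Hom}_R(-,\mathcal{L})$-exactness needed below, since a short exact sequence $0\rightarrow M\rightarrow P\rightarrow M\rightarrow 0$ with $\mathrm{Ext}^1_R(M,L)=0$ is automatically $\mathrm{Hom}_R(-,\mathcal{L})$-exact).

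For $(3)\Rightarrow(1)$, I need to produce the doubly-infinite $\mathrm{Hom}_R(-,\mathcal{L})$-exact complex $\cdots\rightarrow P\xrightarrow{f}P\xrightarrow{f}P\rightarrow\cdots$ with $M\cong\ker f$. Splicing countably many copies of $0\rightarrow M\xrightarrow{\iota}P\xrightarrow{\pi}M\rightarrow 0$ — both to the right and to the left — immediately gives the complex $\cdots\rightarrow P\xrightarrow{\iota\pi}P\xrightarrow{\iota\pi}P\rightarrow\cdots$ with $f=\iota\pi$, each term $P\in\mathcal{P}$, $\ker f=\operatorname{im}\iota\cong M$, and exactness at every spot (the image of $\iota\pi$ equals $\operatorname{im}\iota=\ker\pi=\ker(\iota\pi)$). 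It remains to check that $\mathrm{Hom}_R(-,L)$ keeps this complex exact for every $L\in\mathcal{L}$: applying $\mathrm{Hom}_R(-,L)$ to each short exact sequence $0\rightarrow M\rightarrow P\rightarrow M\rightarrow 0$ gives short exact sequences (surjectivity of $\mathrm{Hom}_R(P,L)\rightarrow\mathrm{Hom}_R(M,L)$ follows from $\mathrm{Ext}^1_R(M,L)=0$), and splicing these reassembles $\mathrm{Hom}_R(\mathbb{P},L)$ as an exact complex. Hence $M\in\mathcal{SGP}$. I do not anticipate a genuine obstacle here; the only point requiring a little care is bookkeeping the splicing so that the single map $f$ is literally the same $\iota\pi$ at every stage and that $\ker f$ really is $M$ up to isomorphism rather than just a syzygy of $M$ — this is exactly why the argument uses the \emph{self}-extension $0\rightarrow M\rightarrow P\rightarrow M\rightarrow 0$ rather than an arbitrary $0\rightarrow M\rightarrow P\rightarrow G\rightarrow 0$.
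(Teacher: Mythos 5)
Your proof is correct, and it is exactly the routine splicing/dimension-shifting argument the paper has in mind when it dismisses this proposition with ``It is straightforward.'' The only points needing care --- that the self-extension $0\rightarrow M\rightarrow P\rightarrow M\rightarrow 0$ splices to a periodic complex with a single map $\iota\pi$ whose kernel is $M$, and that $\mathrm{Ext}^1_R(M,L)=0$ forces $\mathrm{Hom}_R(-,\mathcal{L})$-exactness of each short exact piece and hence of the spliced complex --- are handled correctly.
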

\begin{proof} It is straightforward.
\end{proof}

\begin{prop}\label{prop:2.4}{\it{Every projective module $P$ is strongly Gorenstein $(\mathcal{L}, \mathcal{A})$-projective.
}}
\end{prop}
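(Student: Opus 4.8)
The plan is to verify condition (2) of the preceding proposition: it suffices to exhibit, for a projective module $P$, a $\mathrm{Hom}_{R}(-,\mathcal{L})$-exact exact sequence $0\rightarrow P\rightarrow Q\rightarrow P\rightarrow 0$ with $Q\in\mathcal{P}$. The natural candidate is the canonical split short exact sequence
$$0\rightarrow P\xrightarrow{\iota} P\oplus P\xrightarrow{\pi} P\rightarrow 0,$$
where $\iota(x)=(x,0)$ and $\pi(x,y)=y$; here $Q=P\oplus P$ lies in $\mathcal{P}$ since $\mathcal{P}$ is closed under finite direct sums.

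First I would record that this sequence is exact: $\iota$ is injective, $\pi$ is surjective, and $\im\iota=P\oplus 0=\ker\pi$. The only other point to check is $\mathrm{Hom}_{R}(-,\mathcal{L})$-exactness, and this is where the choice of a \emph{split} sequence does all the work: the map $P\rightarrow P\oplus P$, $y\mapsto(0,y)$, splits $\pi$, so applying the additive functor $\mathrm{Hom}_{R}(-,L)$ yields again a (split) exact sequence for every $L\in\mathcal{L}$. Hence the displayed sequence is $\mathrm{Hom}_{R}(-,\mathcal{L})$-exact, and the preceding proposition gives that $P$ is strongly Gorenstein $(\mathcal{L},\mathcal{A})$-projective.

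I expect no real obstacle here; the single point worth emphasizing is that the $\mathrm{Hom}$-exactness condition is satisfied automatically precisely because the chosen sequence splits. Alternatively, one may invoke condition (3) of the preceding proposition: the same split sequence is exact with middle term in $\mathcal{P}$, and $\mathrm{Ext}^{1}_{R}(P,L)=0$ for every $L\in\mathcal{L}$ because $P$ is projective, so this route works equally well and makes transparent that nothing beyond the standing hypotheses on $(\mathcal{L},\mathcal{A})$ is used.
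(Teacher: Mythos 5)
Your proof is correct and matches the paper's argument: both take the split exact sequence $0\rightarrow P\rightarrow P\oplus P\rightarrow P\rightarrow 0$, observe that splitness guarantees $\mathrm{Hom}_{R}(-,\mathcal{L})$-exactness, and conclude via condition (2) of the preceding proposition. No gaps.
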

\begin{proof} For a projective module $P$, there exists an exact sequence $0\rightarrow P\stackrel{\alpha}\rightarrow P\oplus P\stackrel{(0,1)}\rightarrow P\rightarrow 0$ with $\alpha=\begin{pmatrix}
1\\
0 \\
\end{pmatrix}$. It is $\mathrm{Hom}_{R}(-,\mathcal{L})$-exact since it splits. By Proposition 4.2, $P$ is strongly Gorenstein $(\mathcal{L}, \mathcal{A})$-projective.
\end{proof}

From now on, we have $\mathcal{P}\subseteq\mathcal{SGP}\subseteq\mathcal{GP}
\subseteq\mathcal{GF}\subseteq\mathcal{GF_{(L,A)}}(R)$.

\begin{prop}\label{prop:2.4}{\it{Let $G$ be a strongly Gorenstein $(\mathcal{L}, \mathcal{A})$-projective module. Then the following statements hold.

(1) $G^{\perp}$ is a thick subcategory of $R$-Mod.

(2) $(^{\perp}(G^{\perp}),G^{\perp})$ is a projective cotorsion pair, cogenerated by a set.
}}
\end{prop}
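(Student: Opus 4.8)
The plan is to establish (1) first, and then use it together with the ``cogenerated by a set'' observation to deduce (2) via the standard Eklof--Trlifaj machinery.

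For (1), recall that by Proposition 4.2 a strongly Gorenstein $(\mathcal{L},\mathcal{A})$-projective module $G$ fits into a $\mathrm{Hom}_R(-,\mathcal{L})$-exact exact sequence $0\rightarrow G\rightarrow P\rightarrow G\rightarrow 0$ with $P\in\mathcal{P}$, and $\mathrm{Ext}^i_R(G,L)=0$ for all $L\in\mathcal{L}$ and $i\geq 1$. The key point is that $\mathrm{Ext}^i_R(G,-)$ can be computed using this periodic resolution: splicing the sequence $0\rightarrow G\rightarrow P\rightarrow G\rightarrow 0$ with itself repeatedly produces a (not necessarily $\mathrm{Hom}_R(-,N)$-exact) projective resolution of $G$ that is periodic, so dimension shifting gives $\mathrm{Ext}^{i}_R(G,N)\cong\mathrm{Ext}^{i+1}_R(G,N)$ for every $R$-module $N$ and every $i\geq 1$; equivalently $\mathrm{Ext}^1_R(G,N)\cong\mathrm{Ext}^i_R(G,N)$ for all $i\geq 1$. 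Consequently $G^{\perp}=\{N\mid\mathrm{Ext}^1_R(G,N)=0\}=\{N\mid\mathrm{Ext}^i_R(G,N)=0\ \forall i\geq 1\}$, and this ``full orthogonal'' description makes thickness routine: closure under extensions and direct summands is formal for any orthogonal class $\{G\}^{\perp}$; closure under cokernels of monomorphisms follows because if $0\rightarrow N'\rightarrow N\rightarrow N''\rightarrow 0$ with $N',N\in G^{\perp}$ then the long exact sequence gives $\mathrm{Ext}^1_R(G,N'')$ sandwiched between $\mathrm{Ext}^1_R(G,N)=0$ and $\mathrm{Ext}^2_R(G,N')=0$; and closure under kernels of epimorphisms is symmetric, using $\mathrm{Ext}^1_R(G,N')$ sandwiched between $\mathrm{Ext}^0$-terms and $\mathrm{Ext}^1_R(G,N)$ after noting $\mathrm{Ext}^1_R(G,N')\cong\mathrm{Ext}^2_R(G,\ker)$-type shifts. (Here the periodicity $\mathrm{Ext}^i\cong\mathrm{Ext}^{i+1}$ is what lets a ``degree-two'' vanishing be replaced by ``degree-one'', so no extra hypothesis is needed.)

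For (2), observe first that $(^{\perp}(G^{\perp}),G^{\perp})$ is automatically a cotorsion pair: $^{\perp}(G^{\perp})=\,^{\perp}((\{G\}^{\perp}))$ and any class of the form $^{\perp}(S^{\perp})$ is the left-hand half of a cotorsion pair. Because $G$ is a single module, the cotorsion pair is cogenerated by the set $\{G\}$, hence by the Eklof--Trlifaj theorem (cogenerated-by-a-set cotorsion pairs in $R$-Mod are complete) it is complete. Thickness of $G^{\perp}$ is exactly part (1). It remains to identify $^{\perp}(G^{\perp})\cap G^{\perp}$ with the class $\mathcal{P}$ of projectives. The inclusion $\mathcal{P}\subseteq\,^{\perp}(G^{\perp})$ is trivial, and $\mathcal{P}\subseteq G^{\perp}$ holds since $\mathrm{Ext}^1_R(G,P)$ need not vanish for arbitrary projective $P$\,---\,wait, rather one argues $\mathcal{P}\subseteq G^\perp$ because every projective module is also injective relative to... no: instead note $R\text{-Mod}$ has enough projectives so for $Q\in\,^{\perp}(G^{\perp})\cap G^{\perp}$ one takes a short exact sequence $0\rightarrow K\rightarrow F\rightarrow Q\rightarrow 0$ with $F$ free; since $G^{\perp}$ is thick (closed under kernels of epimorphisms among its members) and $F,Q\in G^{\perp}$ we get $K\in G^{\perp}$, so $\mathrm{Ext}^1_R(Q,K)=0$ as $Q\in\,^{\perp}(G^{\perp})$, the sequence splits, and $Q$ is a summand of $F$, hence projective. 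Conversely, to see $\mathcal{P}\subseteq G^{\perp}$: this is false in general, so the correct statement is that $^{\perp}(G^{\perp})\cap G^{\perp}=\mathcal{P}$ requires also $\mathcal{P}\subseteq\,^\perp(G^\perp)\cap G^\perp$; here one uses that $G\in\mathcal{GP}\subseteq\,^{\perp\infty}\mathcal{L}$ does not directly help, so instead invoke that $G$ is Gorenstein projective and therefore $\mathrm{Ext}^{i}_R(G,P)=0$ for projective $P$ and $i\geq 1$ (projectives lie in the right orthogonal of every Gorenstein projective module by the defining totally acyclic complex). This gives $\mathcal{P}\subseteq G^{\perp}$, completing the identification and hence the proof that $(^{\perp}(G^{\perp}),G^{\perp})$ is a projective cotorsion pair cogenerated by the set $\{G\}$.

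The main obstacle I anticipate is the bookkeeping in (1): one must be careful that the periodic sequence $0\rightarrow G\rightarrow P\rightarrow G\rightarrow 0$, which need not remain exact after applying $\mathrm{Hom}_R(-,N)$ for a general $N$, still computes $\mathrm{Ext}^{\geq 1}_R(G,N)$ correctly\,---\,this is fine because it is genuinely a projective resolution (exactness as a complex of modules is all that is needed for Ext), but the periodicity isomorphism must be stated for the full $\mathrm{Ext}^i$, $i\geq 1$, not just $i=1$, before the thickness arguments for cokernels of monomorphisms go through without an auxiliary hypothesis on $\mathcal{L}$. The rest\,---\,completeness via Eklof--Trlifaj, and the kernel/summand argument identifying the intersection with $\mathcal{P}$\,---\,is standard.
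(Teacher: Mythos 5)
Your proposal is correct and follows essentially the paper's own route: part (1) rests on the same periodicity $\mathrm{Ext}^{i}_{R}(G,-)\cong\mathrm{Ext}^{i+1}_{R}(G,-)$ for $i\geq 1$ coming from the sequence $0\rightarrow G\rightarrow P\rightarrow G\rightarrow 0$ (the paper just performs this shift degree by degree inside the long exact sequences rather than stating it globally), and part (2) uses the same ingredients, namely completeness via Eklof--Trlifaj from cogeneration by the set $\{G\}$, thickness from (1), and $\mathcal{P}\subseteq G^{\perp}$ because $\mathcal{P}\subseteq\mathcal{L}$. The only difference is cosmetic: where you verify $^{\perp}(G^{\perp})\cap G^{\perp}=\mathcal{P}$ by hand (correctly, once the mid-paragraph detour is cleaned up), the paper simply cites [12, Proposition 3.7].
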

\begin{proof} (1) Obviously, $G^{\perp}$ is closed under direct summands and extensions. Let $0\rightarrow L\rightarrow M\rightarrow N\rightarrow 0$ be an exact sequence.
At first, we assume that $L\in G^{\perp}$ and $M\in G^{\perp}$. Then there is an exact sequence $0=\mathrm{Ext}^{1}_{R}(G,M)\rightarrow \mathrm{Ext}^{1}_{R}(G,N)\rightarrow \mathrm{Ext}^{2}_{R}(G,L)$. Since $G$ is strongly Gorenstein $(\mathcal{L}, \mathcal{A})$-projective, there exists an exact sequence $0\rightarrow G\rightarrow P\rightarrow G\rightarrow 0$ with $P\in\mathcal{P}$. Then we get an exact sequence $0=\mathrm{Ext}^{1}_{R}(G,L)\rightarrow \mathrm{Ext}^{2}_{R}(G,L)\rightarrow \mathrm{Ext}^{2}_{R}(P,L)=0$. Then $\mathrm{Ext}^{2}_{R}(G,L)=0$ and hence $\mathrm{Ext}^{1}_{R}(G,N)=0$, that is $N\in G^{\perp}$. Next, we assume that $M\in G^{\perp}$ and $N\in G^{\perp}$. Then there is an exact sequence $0=\mathrm{Ext}^{1}_{R}(G,N)\rightarrow \mathrm{Ext}^{2}_{R}(G,L)\rightarrow \mathrm{Ext}^{2}_{R}(G,M)$=0. Hence, $\mathrm{Ext}^{2}_{R}(G,L)=0$. There is an exact sequence $0=\mathrm{Ext}^{1}_{R}(P,L)\rightarrow \mathrm{Ext}^{1}_{R}(G,L)\rightarrow \mathrm{Ext}^{2}_{R}(G,L)$=0. Then $\mathrm{Ext}^{1}_{R}(G,L)=0$. So $G^{\perp}$ is a thick subcategory of $R$-Mod.

(2) Put a set $\mathcal{S}=\{G\}$. By [14, Theorem 6.11], $(^{\perp}(G^{\perp}),G^{\perp})$ is a complete cotorsion pair, cogenerated by the set $\mathcal{S}$. Since $\mathcal{P}\subseteq\mathcal{L}$, $\mathcal{P}\subseteq G^{\perp}$. It follows from [12, Proposition 3.7] that $(^{\perp}(G^{\perp}),G^{\perp})$ is a projective cotorsion pair.
\end{proof}

Of course, $(^{\perp}(G^{\perp}),G^{\perp})$ is a hereditary cotorsion pair.

\begin{thm}\label{prop:2.4}{\it{Let $G$ be a strongly Gorenstein $(\mathcal{L}, \mathcal{A})$-projective module.
Then there exists a hereditary abelian model structure $(\mathcal{GP},\mathcal{W},G^{\perp})$ on $R$-Mod, where $\mathcal{W}$ can be described in the two following ways:$$\mathcal{W}=\{W\in R-\mathrm{Mod}\mid \mathrm{\exists~an~exact~sequence}~0\rightarrow W\rightarrow A\rightarrow B\rightarrow 0~\mathrm{with}~B\in {^{\perp}(G^{\perp})},~A\in\mathcal{GP}^\perp\}$$
$$~~=\{W\in R-\mathrm{Mod}\mid \mathrm{\exists~an~exact~sequence}~0\rightarrow A^{'}\rightarrow B^{'}\rightarrow W\rightarrow 0~\mathrm{with}~B^{'}\in {^{\perp}(G^{\perp})},~A^{'}\in\mathcal{GP}^\perp\}.$$
}}
\end{thm}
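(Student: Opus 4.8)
\emph{Strategy.} The plan is to obtain $(\mathcal{GP},\mathcal{W},G^{\perp})$ as the Hovey triple produced by gluing the two cotorsion pairs $(\mathcal{GP},\mathcal{GP}^{\perp})$ and $({}^{\perp}(G^{\perp}),G^{\perp})$ via the standard procedure for assembling a Hovey triple from two compatible complete hereditary cotorsion pairs (Gillespie's gluing construction, as used in [5] for Gorenstein objects relative to cotorsion pairs). This applies since $R$-Mod is abelian with enough projectives and injectives. Recall that, applied to complete hereditary cotorsion pairs $(\mathcal{Q},\widetilde{\mathcal{R}})$ and $(\widetilde{\mathcal{Q}},\mathcal{R})$ satisfying $\widetilde{\mathcal{Q}}\subseteq\mathcal{Q}$ (equivalently $\widetilde{\mathcal{R}}\subseteq\mathcal{R}$) and $\mathcal{Q}\cap\widetilde{\mathcal{R}}=\widetilde{\mathcal{Q}}\cap\mathcal{R}$, the procedure yields a unique thick class $\mathcal{W}$ making $(\mathcal{Q},\mathcal{W},\mathcal{R})$ a hereditary Hovey triple, with $\mathcal{W}$ described by precisely the two short exact sequence formulas in the statement once one sets $\mathcal{Q}=\mathcal{GP}$, $\widetilde{\mathcal{R}}=\mathcal{GP}^{\perp}$, $\widetilde{\mathcal{Q}}={}^{\perp}(G^{\perp})$, $\mathcal{R}=G^{\perp}$. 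So it suffices to verify: (a) both pairs are complete and hereditary; (b) $\mathcal{GP}^{\perp}\subseteq G^{\perp}$ (equivalently ${}^{\perp}(G^{\perp})\subseteq\mathcal{GP}$); and (c) the cores coincide, $\mathcal{GP}\cap\mathcal{GP}^{\perp}={}^{\perp}(G^{\perp})\cap G^{\perp}$.

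\emph{Verifying (b) and (c).} Since $G$ is strongly Gorenstein $(\mathcal{L},\mathcal{A})$-projective we have $G\in\mathcal{SGP}\subseteq\mathcal{GP}$, so $\mathcal{GP}^{\perp}\subseteq G^{\perp}$ is immediate, and applying ${}^{\perp}(-)$ (using that $(\mathcal{GP},\mathcal{GP}^{\perp})$ is a cotorsion pair) gives ${}^{\perp}(G^{\perp})\subseteq\mathcal{GP}$; this is (b). For (c): by Proposition 4.4(2) the pair $({}^{\perp}(G^{\perp}),G^{\perp})$ is projective, so ${}^{\perp}(G^{\perp})\cap G^{\perp}=\mathcal{P}$. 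On the other side, $\mathcal{P}\subseteq\mathcal{GP}$ always, and $\mathcal{P}\subseteq\mathcal{GP}^{\perp}$ because $\mathcal{P}\subseteq\mathcal{L}$ and $\mathrm{Ext}^{i}_{R}(\mathcal{GP},\mathcal{L})=0$ for all $i\geq1$; conversely, if $M\in\mathcal{GP}\cap\mathcal{GP}^{\perp}$, then Proposition 3.1 provides an exact sequence $0\to M\to P\to G'\to0$ with $P\in\mathcal{P}$ and $G'\in\mathcal{GP}$, and it splits since $\mathrm{Ext}^{1}_{R}(G',M)=0$, so $M$ is a direct summand of $P$, hence projective. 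Therefore $\mathcal{GP}\cap\mathcal{GP}^{\perp}=\mathcal{P}={}^{\perp}(G^{\perp})\cap G^{\perp}$, which is (c).

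\emph{Verifying (a), and the main obstacle.} The pair $({}^{\perp}(G^{\perp}),G^{\perp})$ is complete (cogenerated by the set $\{G\}$) and hereditary by Proposition 4.4 and the remark following it. For $(\mathcal{GP},\mathcal{GP}^{\perp})$, heredity follows, once it is known to be a cotorsion pair, from $\mathcal{GP}$ being closed under kernels of epimorphisms together with [12, Lemma 2.4]. Completeness of $(\mathcal{GP},\mathcal{GP}^{\perp})$ is the one substantive input, and I expect it to be the main obstacle: it is not elementary from the material developed here, but is available from [13], where the Gorenstein $(\mathcal{L},\mathcal{A})$-projective model structure on $R$-Mod is constructed (alternatively, it can be derived from the strong left Frobenius pair $(\mathcal{GP},\mathcal{P})$ of Example 3.9 together with a deconstructibility/set-cogeneration argument for $\mathcal{GP}$). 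Granting (a)--(c), the gluing construction delivers the hereditary abelian model structure $(\mathcal{GP},\mathcal{W},G^{\perp})$ on $R$-Mod with the two stated descriptions of $\mathcal{W}$; heredity of the model structure is inherited from that of the two cotorsion pairs, and the remaining step is the routine check that the abstract formula for $\mathcal{W}$ matches the two displayed sets verbatim under the identification $\mathcal{Q}=\mathcal{GP}$, $\widetilde{\mathcal{R}}=\mathcal{GP}^{\perp}$, $\widetilde{\mathcal{Q}}={}^{\perp}(G^{\perp})$, $\mathcal{R}=G^{\perp}$.
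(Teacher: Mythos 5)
Your proposal is correct and follows essentially the same route as the paper: the paper also cites [13, Theorem 4.9] for $(\mathcal{GP},\mathcal{GP}^{\perp})$ being a (projective, hence complete hereditary) cotorsion pair, notes that both this pair and $({}^{\perp}(G^{\perp}),G^{\perp})$ have core $\mathcal{P}$ and that $G\in\mathcal{GP}$ forces $\mathcal{GP}^{\perp}\subseteq G^{\perp}$ and ${}^{\perp}(G^{\perp})\subseteq\mathcal{GP}$, and then invokes Gillespie's gluing result [11, Theorem 1.1]. Your extra direct splitting argument for $\mathcal{GP}\cap\mathcal{GP}^{\perp}=\mathcal{P}$ is a harmless elaboration of what the paper gets for free from projectivity of the cotorsion pair.
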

\begin{proof} By [13, Theorem 4.9], we know that $(\mathcal{GP}, \mathcal{GP}^\perp)$ is a projective cotorsion pair.
Because $(\mathcal{GP}, \mathcal{GP}^\perp)$  and $(^{\perp}(G^{\perp}),G^{\perp})$ are two projective cotorsion pairs, $\mathcal{GP}\cap\mathcal{GP}^\perp=\mathcal{P}={^{\perp}(G^{\perp})}\cap G^{\perp}$.
Since $G\in\mathcal{SGP}\subseteq\mathcal{GP}$, $\mathcal{GP}^\perp\subseteq G^{\perp}$ and $^{\perp}(G^{\perp})\subseteq\mathcal{GP}$. It immediately follows from [11, Theorem 1.1].
\end{proof}

\begin{rem}\label{prop:2.4}{\rm{When one chooses different strongly Gorenstein $(\mathcal{L}, \mathcal{A})$-projective module $G$, the different model structure may be obtained.
}}
\end{rem}

We have the following result, which gives the relations between $\mathcal{SGP}$ and $\mathcal{GP}$.

\begin{prop}\label{prop:2.4}{\it{The following conditions are equivalent.

(1) $M$ is a Gorenstein $(\mathcal{L}, \mathcal{A})$-projective module.

(2) $M$ is a direct summand of some strongly Gorenstein $(\mathcal{L}, \mathcal{A})$-projective module.

(3) There exists a strongly Gorenstein $(\mathcal{L}, \mathcal{A})$-projective module $G$ such that $M\in{^{\perp}(G^{\perp})}$.
}}
\end{prop}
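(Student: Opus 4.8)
The plan is to run the cycle $(1)\Rightarrow(2)\Rightarrow(3)\Rightarrow(1)$, with essentially all the content in $(1)\Rightarrow(2)$ and $(3)\Rightarrow(1)$. For $(1)\Rightarrow(2)$ I would use the diagonal-shift construction. By Definition 4.1 (or Proposition 3.1) pick a $\mathrm{Hom}_{R}(-,\mathcal{L})$-exact exact complex of projectives $\mathbb{P}\colon\cdots\to P_{1}\to P_{0}\to P_{-1}\to P_{-2}\to\cdots$, with differentials $d_{i}\colon P_{i}\to P_{i-1}$ and $M\cong\ker d_{-1}=Z_{-1}(\mathbb{P})$. Set $\mathbb{Q}=\bigoplus_{n\in\mathbb{Z}}\shift^{n}\mathbb{P}$. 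The key observation is that, after reindexing, the degree-$k$ term of $\mathbb{Q}$ is for every $k$ the fixed projective module $Q:=\bigoplus_{i\in\mathbb{Z}}P_{i}$, and under this identification each differential of $\mathbb{Q}$ is the single endomorphism $f:=\bigoplus_{i}d_{i}\colon Q\to Q$ sending the $i$-th copy $P_{i}$ to the $(i-1)$-st via $d_{i}$; indeed the differential of $\bigoplus_{n}\shift^{n}\mathbb{P}$ in degree $k$ is $\bigoplus_{n}d_{k-n}$, which does not depend on $k$. Thus $\mathbb{Q}$ has the shape $\cdots\to Q\xrightarrow{f}Q\xrightarrow{f}Q\to\cdots$.

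Then I would check two things. First, $\mathbb{Q}$ is exact, being a direct sum of exact complexes. Second, $\mathbb{Q}$ is $\mathrm{Hom}_{R}(-,\mathcal{L})$-exact, since $\mathrm{Hom}_{R}(\mathbb{Q},L)=\prod_{n}\mathrm{Hom}_{R}(\shift^{n}\mathbb{P},L)$ is a product of exact complexes of abelian groups, hence exact. Therefore $G:=\ker f$ is strongly Gorenstein $(\mathcal{L},\mathcal{A})$-projective by Definition 4.1. Moreover $\ker f$ is the cycle module of $\mathbb{Q}$ in any degree, so $G\cong\bigoplus_{j\in\mathbb{Z}}Z_{j}(\mathbb{P})$, which has $M\cong Z_{-1}(\mathbb{P})$ as a direct summand. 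This proves $(1)\Rightarrow(2)$. (I note that $(2)\Rightarrow(1)$ is immediate from $\mathcal{SGP}\subseteq\mathcal{GP}$ together with the closure of $\mathcal{GP}$ under direct summands, but we close the cycle through $(3)$.)

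For $(2)\Rightarrow(3)$: if $M$ is a direct summand of $G\in\mathcal{SGP}$, then for every $X\in G^{\perp}$ the group $\mathrm{Ext}^{1}_{R}(M,X)$ is a direct summand of $\mathrm{Ext}^{1}_{R}(G,X)=0$, so $M\in{}^{\perp}(G^{\perp})$. For $(3)\Rightarrow(1)$: by Proposition 4.4, $({}^{\perp}(G^{\perp}),G^{\perp})$ is a projective, hence hereditary, cotorsion pair, and by [13, Theorem 4.9] so is $(\mathcal{GP},\mathcal{GP}^{\perp})$; since $G\in\mathcal{SGP}\subseteq\mathcal{GP}$ we get $\mathcal{GP}^{\perp}\subseteq G^{\perp}$, whence ${}^{\perp}(G^{\perp})\subseteq{}^{\perp}(\mathcal{GP}^{\perp})=\mathcal{GP}$ --- exactly the inclusion already recorded in the proof of Theorem 4.5. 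Hence $M\in{}^{\perp}(G^{\perp})\subseteq\mathcal{GP}$.

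The main obstacle is the bookkeeping in $(1)\Rightarrow(2)$: one must check carefully that the infinite direct sum $\bigoplus_{n}\shift^{n}\mathbb{P}$ genuinely collapses, degreewise and on differentials, to a single endomorphism $f$ of the fixed projective $Q$, and that $\mathrm{Hom}_{R}(-,\mathcal{L})$-exactness survives this infinite direct sum (which rests on the exactness of products of exact complexes of abelian groups). Everything else is formal, using only the closure properties of $\mathcal{GP}$ and the cotorsion-pair facts already established in Section 4.
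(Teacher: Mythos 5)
Your proposal is correct. The implications $(1)\Rightarrow(2)$ and $(2)\Rightarrow(3)$ are essentially the paper's own argument: the paper likewise sums up all shifts of the complex $\mathbb{P}$ to produce the strongly Gorenstein $(\mathcal{L},\mathcal{A})$-projective module $\ker(\oplus d_i)\cong\oplus\ker d_i$ containing $M$ as a direct summand (you simply spell out the bookkeeping and the $\mathrm{Hom}_R(-,\mathcal{L})$-exactness of the direct sum more explicitly), and it gets $(2)\Rightarrow(3)$ from $G\in{^{\perp}(G^{\perp})}$ plus closure under summands, which is the same observation as your Ext-summand remark. Where you genuinely diverge is $(3)\Rightarrow(1)$: you quote $[13,$ Theorem $4.9]$ to get the cotorsion pair $(\mathcal{GP},\mathcal{GP}^{\perp})$ and conclude directly that ${^{\perp}(G^{\perp})}\subseteq{^{\perp}(\mathcal{GP}^{\perp})}=\mathcal{GP}$ (an inclusion the paper indeed records in the proof of Theorem 4.5), whereas the paper argues intrinsically: using completeness of $({^{\perp}(G^{\perp})},G^{\perp})$ from Proposition 4.4 it builds, step by step, an exact coresolution $0\rightarrow M\rightarrow P_0\rightarrow P_{-1}\rightarrow\cdots$ whose middle terms lie in $G^{\perp}\cap{^{\perp}(G^{\perp})}=\mathcal{P}$ and whose cosyzygies lie in ${^{\perp}(G^{\perp})}$, notes it is $\mathrm{Hom}_R(-,\mathcal{L})$-exact because $\mathcal{L}\subseteq G^{\perp}$, gets $\mathrm{Ext}^{i}_{R}(M,\mathcal{L})=0$ from heredity, and then invokes Proposition 3.1(2). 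Your route is shorter but leans on the external fact that $\mathcal{GP}$ is the left class of a cotorsion pair; the paper's route is more self-contained (only Propositions 3.1 and 4.4) and has the side benefit of exhibiting an explicit totally $\mathcal{L}$-acyclic coresolution of $M$. Both arguments are valid within the paper's framework.
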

\begin{proof}
$(1) \Rightarrow (2)$ Since $M$ is Gorenstein $(\mathcal{L}, \mathcal{A})$-projective, there exists a $\mathrm{Hom}_{R}(-,\mathcal{L})$-exact exact sequence $$\mathbb{P}:\cdots\rightarrow P_{1}\stackrel{d_{1}}\rightarrow P_{0}\stackrel{d_{0}}\rightarrow P_{-1}\stackrel{d_{-1}}\rightarrow P_{-2}\rightarrow \cdots$$
with each $P_{i}\in\mathcal{P}$ such that $M\cong\ker d_{-1}$.
Next, we sum up all shifts of $\mathbb{P}$ and hence we get a $\mathrm{Hom}_{R}(-,\mathcal{L})$-exact exact sequence $$\cdots\rightarrow \oplus P_{i}\stackrel{\oplus d_{i}}\rightarrow \oplus P_{i}\stackrel{\oplus d_{i}}\rightarrow \oplus P_{i}\stackrel{\oplus d_{i}}\rightarrow \oplus P_{i}\rightarrow \cdots$$
Then $\ker(\oplus d_{i})$ is a strongly Gorenstein $(\mathcal{L}, \mathcal{A})$-projective module and it is obvious that $\oplus \ker d_{i}\cong\ker(\oplus d_{i})$. So $M$ is a direct summand of the strongly Gorenstein $(\mathcal{L}, \mathcal{A})$-projective module $\ker(\oplus d_{i})$.

$(2) \Rightarrow (3)$ Let $M$ be a direct summand of a strongly Gorenstein $(\mathcal{L}, \mathcal{A})$-projective module $G$. Since $G\in{^{\perp}(G^{\perp})}$, $M\in{^{\perp}(G^{\perp})}$.

$(3) \Rightarrow (1)$ By Proposition 4.4, we have an exact sequence $0\rightarrow M\rightarrow P_0\rightarrow L_0\rightarrow 0$ with $P_0\in G^{\perp}$ and $L_0\in {^{\perp}(G^{\perp})}$. Thus $P_0\in G^{\perp}\cap{^{\perp}(G^{\perp})}=\mathcal{P}$. Continuing this process, we get a long exact sequence $0\rightarrow M\rightarrow P_0\stackrel{f_0}\rightarrow P_{-1}\stackrel{f_{-1}}\rightarrow P_{-2}\stackrel{f_{-2}}\rightarrow \cdots$ with $P_{i}\in \mathcal{P}$ and $\ker f_{i}\in {^{\perp}(G^{\perp})}$. Since $\mathcal{L}\subseteq G^{\perp}$, this long exact sequence is $\mathrm{Hom}_{R}(-,\mathcal{L})$-exact.
Note that $(^{\perp}(G^{\perp}),G^{\perp})$ is a hereditary cotorsion pair, so $\mathrm{Ext}^{i}_{R}(M,L)=0$ for any $L\in\mathcal{L}$ and $i\geq 1$. By Proposition 3.1, $M$ is a Gorenstein $(\mathcal{L}, \mathcal{A})$-projective module.
\end{proof}

In the following, we recall some notions and basic facts.

Let $\mathcal{S}$ be a class of $R$-modules. A complex $X$ is in $dw\mathcal{S}$ if $X_{j}\in\mathcal{S}$ for any $j\in \mathbb{Z}$.

An exact complex $X$ is in $ex\mathcal{S}$ if $X_{j}\in\mathcal{S}$ for any $j\in \mathbb{Z}$.

An exact complex $X$ is in $\widetilde{\mathcal{S}}$ if $Z_{j}(X)\in\mathcal{S}$ for any $j\in \mathbb{Z}$.

Let $(\mathcal{S},\mathcal{T})$ be a cotorsion pair in $R$-Mod. A complex $Y$ is a dg$\mathcal{S}$ complex if each $Y_{n}\in\mathcal{S}$ and if each map $Y\rightarrow U$ is null homotopic for each complex $U\in\mathcal{\widetilde{T}}$. The definition of a dg$\mathcal{T}$ complex is dual. We use $dg\mathcal{S}$ and $dg\mathcal{T}$ to denote the class of all dg$\mathcal{S}$ complexes and the class of all dg$\mathcal{T}$ complexes respectively.

By Proposition 4.4, we know that for a strongly Gorenstein $(\mathcal{L}, \mathcal{A})$-projective module $G$, $(^{\perp}(G^{\perp}),G^{\perp})$ is a projective cotorsion pair, cogenerated by a set $\{G\}$. It immediately follows from [12, Proposition 7.3] that there are six projective cotorsion pairs in Ch$(R)$, cogenerated by sets, namely, $(dw^{\perp}(G^{\perp}), (dw^{\perp}(G^{\perp}))^\perp)$, $(ex^{\perp}(G^{\perp}), (ex^{\perp}(G^{\perp}))^\perp)$,
$(\widetilde{^{\perp}(G^{\perp})}, dg G^{\perp})$, $(^\perp(dw G^{\perp}), dw G^{\perp})$, $(^\perp(ex G^{\perp}), ex G^{\perp})$,
and $(dg^{\perp}(G^{\perp}), \widetilde{G^{\perp}})$. Of course, for $(\mathcal{P}, R$-Mod$)$, there exist four distinct projective cotorsion pairs in Ch$(R)$. They are
$(dw\mathcal{P}, (dw\mathcal{P})^\perp)$, $(ex\mathcal{P}, (ex\mathcal{P})^\perp)$,
$(\widetilde{\mathcal{P}},$ Ch$(R))$, and $(dg\mathcal{P}, \mathcal{E})$. In the following, we will use these projective cotorsion pairs to construct some recollements. For this goal, we recall some notions.

Let $\mathcal{G}$ be a class of $R$-modules and $(\mathcal{S},\mathcal{T})$ a cotorsion pair in $R$-Mod. we use $D(R)$ to denote the derived category of $R$-modules,
$K(\mathcal{G})$ to denote the homotopy category of $\mathcal{G}$,
$K_{ex}(\mathcal{G})$ to denote the homotopy category consisting of exact complexes of $\mathcal{G}$, $K(dg\mathcal{S})$ to denote the homotopy category consisting of dg$\mathcal{S}$ complexes, $K_{ex}(dg\mathcal{S})$ to denote the homotopy category consisting of exact dg$\mathcal{S}$ complexes,
$K(^\perp(dw\mathcal{T}))$ to denote the homotopy category consisting of complexes in $^\perp(dw\mathcal{T})$, and $K(^\perp(ex\mathcal{T}))$ to denote the homotopy category consisting of complexes in $^\perp(ex\mathcal{T})$.

\begin{thm}\label{prop:2.4}{\it{There exist five recollements associated to a strongly Gorenstein $(\mathcal{L}, \mathcal{A})$-projective module $G$, where three recollements are relative to the derived category $D(R)$, as follows:

(1)$$\xymatrix{K_{ex}(^{\perp}(G^{\perp}))\ar^-{}[r]&K(^{\perp}(G^{\perp}))\ar^-{}[r]
\ar^-{}@/^1.2pc/[l]\ar_-{}@/_1.6pc/[l]
&D(R)\ar^-{}@/^1.2pc/[l]\ar_-{}@/_1.6pc/[l]}$$

(2)$$\xymatrix{K_{ex}(dg^{\perp}(G^{\perp}))\ar^-{}[r]&K(dg^{\perp}(G^{\perp}))\ar^-{}[r]
\ar^-{}@/^1.2pc/[l]\ar_-{}@/_1.6pc/[l]
&D(R)\ar^-{}@/^1.2pc/[l]\ar_-{}@/_1.6pc/[l]}$$

(3)$$\xymatrix{K(^\perp(dw G^{\perp}))\ar^-{}[r]&K(^\perp(ex G^{\perp}))\ar^-{}[r]
\ar^-{}@/^1.2pc/[l]\ar_-{}@/_1.6pc/[l]
&D(R)\ar^-{}@/^1.2pc/[l]\ar_-{}@/_1.6pc/[l]}$$

(4)$$\xymatrix{K(\mathcal{P})\ar^-{}[r]&K(^{\perp}(G^{\perp}))\ar^-{}[r]
\ar^-{}@/^1.2pc/[l]\ar_-{}@/_1.6pc/[l]
&K(^\perp(dw G^{\perp}))\ar^-{}@/^1.2pc/[l]\ar_-{}@/_1.6pc/[l]}$$

(5)$$\xymatrix{K_{ex}(\mathcal{P})\ar^-{}[r]&K(^{\perp}(G^{\perp}))\ar^-{}[r]
\ar^-{}@/^1.2pc/[l]\ar_-{}@/_1.6pc/[l]
&K(^\perp(ex G^{\perp}))\ar^-{}@/^1.2pc/[l]\ar_-{}@/_1.6pc/[l]}$$
}}
\end{thm}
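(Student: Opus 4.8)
The plan is to derive all five recollements from the general machinery of Gillespie on recollements induced by projective cotorsion pairs in $\mathrm{Ch}(R)$ (cf.\ [12]), applied to the six projective cotorsion pairs in $\mathrm{Ch}(R)$ cogenerated by sets that come from $({}^{\perp}(G^{\perp}),G^{\perp})$ together with the four projective cotorsion pairs coming from $(\mathcal{P},R\text{-}\mathrm{Mod})$, all of which were listed just before the statement. The key observation is that a projective cotorsion pair $(\mathcal{Q},\mathcal{R})$ in an abelian (or exact) category with enough projectives gives rise to a \emph{projective model structure}, whose homotopy category is the stable category $\underline{\mathcal{Q}}$ obtained by killing projectives; and when one has a containment of such model structures (a ``localizing'' or ``colocalizing'' situation), Gillespie's correspondence between projective recollements of triangulated categories and triples of compatible projective cotorsion pairs produces a recollement. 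So the whole proof is an exercise in (i)~identifying, for each of the five claimed recollements, the three projective cotorsion pairs in $\mathrm{Ch}(R)$ whose homotopy categories are the three triangulated categories in that row, and (ii)~checking the compatibility conditions (nestedness of the classes and equality of the intersections with the projectives of $\mathrm{Ch}(R)$) that Gillespie's theorem requires.

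Concretely, for recollements (1) and (2) I would use the two projective cotorsion pairs $(\widetilde{{}^{\perp}(G^{\perp})},\,dg\,G^{\perp})$ and $(dg\,{}^{\perp}(G^{\perp}),\,\widetilde{G^{\perp}})$ in $\mathrm{Ch}(R)$ together with $(\widetilde{\mathcal{P}},\,\mathrm{Ch}(R))$: recall that killing projective complexes in $\mathrm{Ch}(R)$ gives the derived category $D(R)$ from $dg$-projective complexes, that $K(^{\perp}(G^{\perp}))$ and $K(dg\,{}^{\perp}(G^{\perp}))$ are the homotopy categories of cofibrant objects of the two $G^{\perp}$-type model structures, and that $K_{ex}(^{\perp}(G^{\perp}))$ and $K_{ex}(dg\,{}^{\perp}(G^{\perp}))$ arise as the ``trivial'' parts. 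Since $\mathcal{P}\subseteq{}^{\perp}(G^{\perp})$ (because $\mathcal{P}\subseteq\mathcal{L}\subseteq G^{\perp}$ forces projectives into the left class by Proposition~4.4) and $\widetilde{\mathcal{P}}\cap\mathrm{Ch}(R)=\widetilde{\mathcal{P}}$ equals the intersection of the relevant left class with $\widetilde{\mathcal{P}}$, the nestedness and intersection hypotheses of Gillespie's recollement theorem are satisfied, and the recollements (1), (2) follow; (3) is the analogous statement using instead the ``$dw$'' and ``$ex$'' versions $({}^{\perp}(dw\,G^{\perp}),dw\,G^{\perp})$ and $({}^{\perp}(ex\,G^{\perp}),ex\,G^{\perp})$ together with $(\widetilde{\mathcal{P}},\mathrm{Ch}(R))$ (or equivalently with the pair whose cofibrants are exact), noting that an exact complex of projectives with cycles in ${}^{\perp}(G^{\perp})$ is automatically contractible modulo... — more precisely one checks the three cotorsion pairs are nested with common projective core $\widetilde{\mathcal{P}}$ (resp.\ the appropriate degreewise-projective class).

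For recollements (4) and (5) I would feed Gillespie's theorem the triple consisting of $(\widetilde{\mathcal{P}},\mathrm{Ch}(R))$ (resp.\ its $ex$-variant whose cofibrants are $ex\,\mathcal{P}$), the pair $({}^{\perp}(dw\,G^{\perp}),dw\,G^{\perp})$ (resp.\ $({}^{\perp}(ex\,G^{\perp}),ex\,G^{\perp})$), and the pair $(\widetilde{{}^{\perp}(G^{\perp})},dg\,G^{\perp})$ whose cofibrants give $K(^{\perp}(G^{\perp}))$: the left-hand terms $K(\mathcal{P})$ and $K_{ex}(\mathcal{P})$ are the homotopy categories of the projective/exact-projective model structures, the middle term $K(^{\perp}(G^{\perp}))$ is common, and the quotient terms $K(^{\perp}(dw\,G^{\perp}))$, $K(^{\perp}(ex\,G^{\perp}))$ record the Bousfield localization. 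Here one again verifies $\mathcal{P}\subseteq{}^{\perp}(G^{\perp})$ in $\mathrm{Ch}(R)$ degreewise and that the intersections of the three left classes with the projective complexes all coincide with $\widetilde{\mathcal{P}}$, which is exactly what makes the three projective cotorsion pairs a ``compatible triple'' in the sense required for a projective recollement.

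The main obstacle I anticipate is not any single deep computation but the bookkeeping of \emph{which} homotopy categories the abstract cofibrant/trivial/fibrant classes of each of these ten model structures actually are — i.e.\ correctly matching $K_{ex}(dg\,{}^{\perp}(G^{\perp}))$, $K(^{\perp}(dw\,G^{\perp}))$, etc.\ to the ``trivial'' or ``Bousfield-localized'' pieces of the right model structure, and then verifying the containments between the left/right classes of the three cotorsion pairs in each row so that the hypotheses of the recollement theorem [12, Thm.\ (projective recollement)] genuinely apply. Once the dictionary is set, each of the five assertions reduces to citing [12, Proposition~7.3] (for the existence of the six/four projective cotorsion pairs in $\mathrm{Ch}(R)$), Proposition~4.4 (for $({}^{\perp}(G^{\perp}),G^{\perp})$ being a projective cotorsion pair cogenerated by a set), the containment $\mathcal{P}\subseteq{}^{\perp}(G^{\perp})$, and Gillespie's recollement theorem, with no further calculation.
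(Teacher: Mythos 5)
Your overall strategy is the paper's: take the six projective cotorsion pairs in $\mathrm{Ch}(R)$ induced by $({}^{\perp}(G^{\perp}),G^{\perp})$ and the four induced by $(\mathcal{P},R\text{-Mod})$ via [12, Proposition 7.3], and feed suitable triples of them into Gillespie's recollement theorem [12, Theorem 4.7]. But the execution has a genuine gap: your dictionary between cotorsion pairs and the triangulated categories in the five diagrams is systematically mismatched. The homotopy category of the model structure coming from a projective cotorsion pair $(\mathcal{Q},\mathcal{W})$ in $\mathrm{Ch}(R)$ is $\mathcal{Q}$ modulo chain homotopy, so the pair producing $D(R)$ is $(dg\mathcal{P},\mathcal{E})$ (with $\mathcal{E}$ the exact complexes), since $K(dg\mathcal{P})\cong D(R)$; the pair $(\widetilde{\mathcal{P}},\mathrm{Ch}(R))$ that you propose as the third member in (1)--(3) has homotopy category $0$, because $\widetilde{\mathcal{P}}$ is exactly the class of categorically projective complexes. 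Likewise, recollement (1) involves $K({}^{\perp}(G^{\perp}))$ and $K_{ex}({}^{\perp}(G^{\perp}))$, which come from the pairs $(dw\,{}^{\perp}(G^{\perp}),(dw\,{}^{\perp}(G^{\perp}))^{\perp})$ and $(ex\,{}^{\perp}(G^{\perp}),(ex\,{}^{\perp}(G^{\perp}))^{\perp})$, not from the $dg$/tilde pairs you list (those belong to recollement (2)); and in (4)--(5) the middle term $K({}^{\perp}(G^{\perp}))$ comes from the $dw$ pair, not from $(\widetilde{{}^{\perp}(G^{\perp})},dg\,G^{\perp})$, while the left terms $K(\mathcal{P})$ and $K_{ex}(\mathcal{P})$ come from $(dw\mathcal{P},(dw\mathcal{P})^{\perp})$ and $(ex\mathcal{P},(ex\mathcal{P})^{\perp})$, not from $(\widetilde{\mathcal{P}},\mathrm{Ch}(R))$.

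Moreover, the hypotheses of [12, Theorem 4.7] are not of the form ``common projective core $\widetilde{\mathcal{P}}$''; for each row one must verify a containment of left classes together with a specific intersection identity, and these identities are the real content of the proof. Concretely, the paper checks $dg\mathcal{P}\subseteq dw\,{}^{\perp}(G^{\perp})$ and $dw\,{}^{\perp}(G^{\perp})\cap\mathcal{E}=ex\,{}^{\perp}(G^{\perp})$ for (1); $dg\,{}^{\perp}(G^{\perp})\cap\mathcal{E}=\widetilde{{}^{\perp}(G^{\perp})}$ for (2); $\mathcal{E}\cap{}^{\perp}(ex\,G^{\perp})={}^{\perp}(dw\,G^{\perp})$ for (3), which is obtained from the Hovey triple $({}^{\perp}(ex\,G^{\perp}),\mathcal{E},dw\,G^{\perp})$ of [8, Theorem 7.4.3] and is not addressed at all in your sketch (your sentence about exact complexes of projectives trails off precisely here); ${}^{\perp}(dw\,G^{\perp})\subseteq dw\,{}^{\perp}(G^{\perp})$ and $dw\,{}^{\perp}(G^{\perp})\cap dw\,G^{\perp}=dw(\,{}^{\perp}(G^{\perp})\cap G^{\perp})=dw\mathcal{P}$ for (4); and ${}^{\perp}(ex\,G^{\perp})\subseteq dw\,{}^{\perp}(G^{\perp})$ and $ex\,G^{\perp}\cap dw\,{}^{\perp}(G^{\perp})=ex\mathcal{P}$ for (5), the inclusions coming from [12, Proposition 7.3] and the equality ${}^{\perp}(G^{\perp})\cap G^{\perp}=\mathcal{P}$ from the fact that $({}^{\perp}(G^{\perp}),G^{\perp})$ is a projective cotorsion pair (Proposition 4.4). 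Until the pairs are matched to the correct homotopy categories and these intersection conditions are actually established, the appeal to Gillespie's theorem does not go through.
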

\begin{proof} (1) We have projective cotorsion pairs $(dw^{\perp}(G^{\perp}), (dw^{\perp}(G^{\perp}))^\perp)$, $(ex^{\perp}(G^{\perp}), (ex^{\perp}(G^{\perp}))^\perp)$,
and $(dg\mathcal{P}, \mathcal{E})$. They satisfy that $dg\mathcal{P}\subseteq dw^{\perp}(G^{\perp})$ since $\mathcal{P}\subseteq{^{\perp}(G^{\perp})}$ and $dw^{\perp}(G^{\perp})\cap\mathcal{E}=ex^{\perp}(G^{\perp})$. The first recollement follows from [12, Theorem 4.7] and $K(dg\mathcal{P})\cong D(R)$.

(2) There are three projective cotorsion pairs $(dg^{\perp}(G^{\perp}), \widetilde{G^{\perp}})$, $(\widetilde{^{\perp}(G^{\perp})}, dg G^{\perp})$,
and $(dg\mathcal{P}, \mathcal{E})$. We know that $dg\mathcal{P}\subseteq dg^{\perp}(G^{\perp})$, $dg^{\perp}(G^{\perp})\cap\mathcal{E}=\widetilde{^{\perp}(G^{\perp})}$, and $K(dg\mathcal{P})\cong D(R)$. By [12, Theorem 4.7], the second recollement is obtained.

(3) Consider three projective cotorsion pairs $(^\perp(ex G^{\perp}), ex G^{\perp})$, $(^\perp(dwG^{\perp}), dw G^{\perp})$,
and $(dg\mathcal{P}, \mathcal{E})$. Since $ex G^{\perp}\subseteq\mathcal{E}$, we get $dg\mathcal{P}\subseteq {^\perp(ex G^{\perp})}$. By [8, Theorem 7.4.3], we have a Hovey triple $(^\perp(ex G^{\perp}), \mathcal{E},dw G^{\perp})$. Thus
$\mathcal{E}\cap{^\perp(ex G^{\perp})}={^\perp(dw G^{\perp})}$.
By [12, Theorem 4.7], we get the third recollement.

(4) $(dw^{\perp}(G^{\perp}), (dw^{\perp}(G^{\perp}))^\perp)$, $(dw\mathcal{P}, (dw\mathcal{P})^\perp)$,
and $(^\perp(dwG^{\perp}), dwG^{\perp})$ are three projective cotorsion pairs. By [12, Proposition 7.3], $^\perp(dw G^{\perp})\subseteq dw^{\perp}(G^{\perp})$.
However, $dw^{\perp}(G^{\perp})\cap dw G^{\perp}=dw(^{\perp}(G^{\perp})\cap G^{\perp})=dw\mathcal{P}$. Also, it immediately follows from [12, Theorem 4.7].

(5) By projective cotorsion pairs $(dw^{\perp}(G^{\perp}), (dw^{\perp}(G^{\perp}))^\perp)$, $(ex\mathcal{P}, (ex\mathcal{P})^\perp)$,
and $(^\perp(ex G^{\perp}), ex G^{\perp})$, we get $ex G^{\perp}\cap dw^{\perp}(G^{\perp})=(\mathcal{E}\cap dw G^{\perp})\cap dw^{\perp}(G^{\perp})=\mathcal{E}\cap dw(^{\perp}(G^{\perp})\cap G^{\perp})=ex\mathcal{P}$, and by [12, Proposition 7.3], $^\perp(exG^{\perp})\subseteq dw^{\perp}(G^{\perp})$.
By [12,Theorem 4.7], we obtain the last recollement.

This completes the proof.
\end{proof}

\begin{rem}\label{prop:2.4}{\rm{In this section, when $(\mathcal{L},\mathcal{A})$ is the level duality pair, one can obtain some specific results about $\mathcal{G}_{ac}\mathcal{P}$.
}}
\end{rem}

\begin{center}{\bf{Acknowledgments}}\end{center}
This work was supported by National Natural Science Foundation of China (No.11761060, 11901463), Science and Technology Project of Gansu Province (20JR5RA517),
Innovation Ability Enhancement Project of Gansu Higher Education Institutions (2019A-002) and Improvement of Young Teachers' Scientific Research Ability (NWNU-LKQN-18-30).

\begin{center}{\bf{References}}\end{center}

[1] V. Becerril, O. Mendoza, M.A. P$\acute{\mathrm{e}}$rez, V. Santiago, Frobenius pairs in abelian categories: correspondences with cotorsions pairs, exact model categories, and Auslander-Buchweitz contexts, \emph{J. Homotopy Relat. Struct.} \textbf{14} (2019) 1-50.

[2] A.A. Beilinson, J. Bernstein, P. Deligne, Faisceaux pervers, Ast\'{e}risque, \textbf{100} (1982) 5-171.

[3] D. Bravo, J. Gillespie, M. Hovey, The stable module category of a general ring (arXiv:1405.5768).

[4] T. B$\ddot{u}$hler, Exact categories, \emph{Expo. Math.} \textbf{28} (2010) 1-69.

[5] W.J. Chen, Z.K. Liu, X.Y. Yang, A new method to construct model structures from a cotorsion pair, \emph{Comm. Algebra} \textbf{47} (2019) 4420-4431.

[6] N.Q. Ding, Y.L. Li, L.X. Mao, Strongly Gorenstein flat modules, \emph{J. Aust. Math. Soc.} \textbf{86} (2009) 323-338.

[7] E.E. Enochs, O.M.G. Jenda, \emph{Relative Homological Algebra,} Berlin-New York: Walter de Gruyter. 2000

[8] E.E. Enochs, O.M.G. Jenda, \emph{Relative Homological Algebra}, vol. 2, de Gruyter Exp. Math., vol. 54, Walter de Gruyter. 2011

[9] E.E. Enochs, O.M.G. Jenda, Torrecillas, Gorenstein injective and projective modules, \emph{Math. Z.} \textbf{220} (1993) 611-633.

[10] J. Gillespie, Model structures on exact categories, \emph{J. Pure Appl. Algebra} \textbf{215} (2011) 2892-2902.

[11] J. Gillespie, How to construct a Hovey triple from two cotorsion pairs, \emph{Fund. Math.} \textbf{230} (2015) 281-289.

[12] J. Gillespie, Gorenstein complexes and recollements from cotorsion pairs, \emph{Adv. Math.} \textbf{291} (2016) 859-911.

[13] J. Gillespie, Duality pairs and stable module categores, \emph{J. Pure Appl. Algebra} \textbf{223} (2019) 3425-3435.

[14] R. G\"{o}bel, J. Trlifaj, \emph{Approximations and endomorphism Algebras of Modules,} Berlin-New York: Walter de Gruyter. 2012

[15] H. Holm, P. J$\mathrm{\phi}$rgensen, Cotorsion pairs induced by duality pairs, \emph{J. Commut. Algebra} \textbf{1} (2009) 621-633.

[16] M. Hovey, Cotorsion pairs, model category structures, and representation theory, \emph{Math. Z.} \textbf{241} (2002) 553-592.

[17] B. Keller, \emph{Derived categories and their uses,} In: Handbook of algebra, vol. 1, North-Holland, Amsterdam, pp. 671--701. 1996

[18] Z.P. Wang, G. Yang, R.M. Zhu, Gorenstein flat modules with respect to duality pairs, \emph{Comm. Algebra} \textbf{47} (2019) 4989-5006.

[19] X.Y. Yang, W.J. Chen, Relative homological dimensions and Tate cohomology of complexes with respect to cotorsion pairs, \emph{Comm. Algebra} \textbf{45} (2017) 2875-2888.

\end{document}